\renewcommand*{\bar}{\overline}
\newcommand{\TwoByTwo}[4]{\genfrac{(}{)}{0pt}{1}{\: #1\hspace{5pt} #2 \: }{\: #3 \hspace{5pt} #4\: }}
\newcommand{\TwoByTwoT}[4]{\genfrac{[}{]}{0pt}{1}{\, #1\hspace{3pt} #2 \, }{\, #3 \hspace{3pt} #4\, }}
\newcommand{\TwoByTwoJ}[4]{\genfrac{(}{)}{0pt}{1}{\: #1\hspace{0pt} #2 \: }{\: #3 \hspace{5pt} #4\: }}
\DeclareMathOperator{\ord}{ord}
\newcommand{\new}{\textnormal{new}}
\newcommand{\rT}{\mathrm{T}}
\def\imod#1{\allowbreak\mkern2mu({\operator@font mod}\,\,#1)}
\def\jmod#1{\allowbreak\mkern5mu({\operator@font mod}\,\,#1)}
\theoremstyle{plain}
\newtheorem{theorem}[equation]{Theorem}
\newtheorem{lemma}[equation]{Lemma}
\newtheorem{prop}[equation]{Proposition}
\newtheorem{cor}[equation]{Corollary}
\theoremstyle{definition}
\newtheorem{remark}[equation]{Remark}
\newtheorem*{ackn}{Acknowledgements}
\numberwithin{equation}{section}
\begin{document}

\title[A hypergeometric function associated to a Siegel eigenform]{A finite field hypergeometric function associated to eigenvalues of a Siegel eigenform}

\author{Dermot M\lowercase{c}Carthy}
\address{Department of Mathematics \& Statistics\\
Texas Tech University\\
Lubbock, TX 79409-1042\\
USA}
\email{dermot.mccarthy@ttu.edu}

\author{Matthew A. Papanikolas}
\address{Department of Mathematics \\
Texas A\&M University \\
College Station, TX 77843-3368\\
USA}
\email{map@math.tamu.edu}

\thanks{This project was partially supported by NSF Grant DMS-1200577.}
\subjclass[2010]{Primary: 11F46, 11T24; Secondary: 11F11, 11G20, 33E50}

\date{November 4, 2014}

\begin{abstract}
Although links between values of finite field hypergeometric functions and eigenvalues of elliptic modular forms are well known, we establish in this paper that there are also connections to eigenvalues of Siegel modular forms of higher degree.
Specifically, we relate the eigenvalue of the Hecke operator of index $p$ of a Siegel eigenform of degree $2$ and level $8$ to a special value of a ${}_4F_3$-hypergeometric function.
\end{abstract}

\maketitle


\section{Introduction and Statement of Results}
One of the more interesting applications of hypergeometric functions over finite fields is their links to elliptic modular forms and in particular Hecke eigenforms \cite{A, AO, E, FOP, F2, FMcC, L, McC5, M, O, P}.
It is anticipated that these links represent a deeper connection that also encompasses Siegel modular forms of higher degree, and the purpose of this paper is to provide new evidence in this direction.
Specifically, we relate the eigenvalue of the Hecke operator of index $p$ of a certain Siegel eigenform of degree $2$ and level $8$ to a special value of a ${}_4F_3$-hypergeometric function over $\mathbb{F}_p$.
We believe this is the first result connecting hypergeometric functions over finite fields to Siegel modular forms of degree $>1$.

Hypergeometric functions over finite fields were originally defined by Greene \cite{G}, who first established these functions as analogues of classical hypergeometric functions.
Functions of this type were also introduced by Katz \cite{K} about the same time.
In the present article we use a normalized version of these functions defined by the first author \cite{McC6}, which is more suitable for our purposes.
The reader is directed to \cite[\S 2]{McC6} for the precise connections among these three classes of functions.

Let $\mathbb{F}_{p}$ denote the finite field with $p$ elements, $p$ a prime, and let $\widehat{\mathbb{F}^{*}_{p}}$ denote the group of multiplicative characters of $\mathbb{F}^{*}_{p}$.
We extend the domain of $\chi \in \widehat{\mathbb{F}^{*}_{p}}$ to $\mathbb{F}_{p}$ by defining $\chi(0):=0$ (including for the trivial character $\varepsilon$) and denote $\bar{\chi}$ as the inverse of $\chi$.
Let $\theta$ be a fixed non-trivial additive character of $\mathbb{F}_p$ and for $\chi \in \widehat{\mathbb{F}^{*}_{p}}$ we have the Gauss sum $g(\chi):= \sum_{x \in \mathbb{F}_p} \chi(x) \theta(x)$.
Then for $A_0,A_1,\dotsc, A_n, B_1, \dotsc, B_n \in \widehat{\mathbb{F}_p^{*}}$ and $x \in \mathbb{F}_p$ define
\begin{multline}\label{def_HypFnFF}
{_{n+1}F_{n}} {\biggl( \begin{array}{cccc} A_0, & A_1, & \dotsc, & A_n \\
 \phantom{B_0,} & B_1, & \dotsc, & B_n \end{array}
\Big| \; x \biggr)}_{p}\\
:= \frac{1}{p-1}  \sum_{\chi \in \widehat{\mathbb{F}_p^{*}}}
\prod_{i=0}^{n} \frac{g(A_i \chi)}{g(A_i)}
\prod_{j=1}^{n} \frac{g(\bar{B_j \chi})}{g(\bar{B_j})}
 g(\bar{\chi})
 \chi(-1)^{n+1}
 \chi(x).
\end{multline}

One of the first connections between finite field hypergeometric functions and the coefficients of elliptic modular forms is due to Ahlgren and Ono~\cite{AO}, who proved the following.  Throughout we let $\phi  \in \widehat{\mathbb{F}_p^{*}}$ denote the Legendre symbol $\bigl(\frac{\cdot}{p}\bigr)$.

\begin{theorem}[{Ahlgren, Ono~\cite[Thm.~6]{AO}}]\label{thm_AO}
Consider the unique newform $g \in S_4(\Gamma_0(8))$ and the integers $d(n)$ defined by
\[
  g(z) = \sum_{n=1}^\infty d(n) q^n = q \prod_{m=1}^\infty (1-q^{2m})^4(1-q^{4m})^4,
  \quad q := e^{2\pi i z}.
\]
Then for an an odd prime $p$,
\[
  {_{4}F_3} \biggl( \begin{array}{cccc} \phi, & \phi, & \phi, & \phi \vspace{.02in}\\
\phantom{\phi} & \varepsilon, & \varepsilon, & \varepsilon \end{array}
\Big| \; 1 \biggr)_{p}
=d(p) + p.
\]
\end{theorem}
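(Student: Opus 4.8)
The plan is to evaluate the defining sum explicitly in terms of Gauss sums, to recognize the resulting character sum as a normalized count of $\mathbb{F}_p$-points on an explicit Calabi--Yau threefold over $\mathbb{Q}$, and then to extract $d(p)$ from the modularity of that threefold.

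First I would substitute $A_0 = A_1 = A_2 = A_3 = \phi$, $B_1 = B_2 = B_3 = \varepsilon$ and $x = 1$ into \eqref{def_HypFnFF}. Since $\overline{\varepsilon\chi} = \bar\chi$, $\chi(-1)^4 = 1$ and $\chi(1) = 1$, the $\chi$-term collapses to $g(\phi\chi)^4 g(\bar\chi)^4 \big/ \bigl(g(\phi)^4 g(\varepsilon)^3\bigr)$. Using the standard evaluations $g(\varepsilon) = -1$ and $g(\phi)^2 = \phi(-1)\,p$, so that $g(\phi)^4 g(\varepsilon)^3 = -p^2$, this gives the exact identity
\[
  {}_4F_3\biggl(\begin{array}{cccc}\phi,&\phi,&\phi,&\phi\\ \phantom{\phi}&\varepsilon,&\varepsilon,&\varepsilon\end{array}\Big|\;1\biggr)_p = \frac{-1}{p^2(p-1)}\sum_{\chi\in\widehat{\mathbb{F}_p^*}} g(\phi\chi)^4\,g(\bar\chi)^4 .
\]

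Second, I would turn the sum $\sum_\chi g(\phi\chi)^4 g(\bar\chi)^4$ into a point count. Expanding each Gauss sum as a sum over $\mathbb{F}_p$ and performing the sum over $\chi$ by orthogonality collapses the characters into multiplicative constraints among the summation variables; combined with the identity $\sum_{\psi^2 = \varepsilon}\psi = 1 + \phi$ for detecting squares, and after isolating the degenerate characters $\chi \in \{\varepsilon, \phi\}$ where $g(\chi)g(\bar\chi) \neq \chi(-1)p$, this expresses the sum through $N_p$, the number of $\mathbb{F}_p$-points of an explicit rigid Calabi--Yau threefold $V/\mathbb{Q}$. The eight Gauss sums and the single quadratic character $\phi$ pin $V$ down as a smooth model of a double octic, equivalently a Schoen-type fiber product of two rational elliptic surfaces. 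Being rigid, the interesting part of $H^3_{\mathrm{et}}(V)$ is two-dimensional, and by the modularity of $V$ its Frobenius trace is the $p$-th coefficient of a weight-$4$ newform of level dividing $8$; since $S_4(\Gamma_0(8))$ is one-dimensional and spanned by $g = \eta(2z)^4\eta(4z)^4$, that trace must be $d(p)$. The algebraic (Tate) classes in $H^0, H^2, H^4, H^6$ and those created by the resolution contribute only powers of $p$, and tracking them through the factor $-1/(p^2(p-1))$ is what yields the summand $+\,p$.

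The principal difficulty is this middle step. Identifying the correct threefold $V$ and establishing (or citing) its modularity is the deep input, and equally delicate is the precise accounting of the Tate classes and of the degenerate characters, which must be carried out exactly in order to land on $d(p) + p$ rather than on $d(p) + p$ up to an undetermined error of lower order. By contrast the opening reduction to Gauss sums and the orthogonality manipulations are routine.
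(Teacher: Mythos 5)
A point of orientation first: this paper does not prove Theorem~\ref{thm_AO}. It is quoted, in renormalized form, from Ahlgren--Ono \cite{AO}, so the only meaningful comparison is with the proof in \cite{AO}, which in turn rests on the modularity result of the companion paper \cite{AO2}. Your opening reduction is correct: substituting $A_i=\phi$, $B_j=\varepsilon$, $x=1$ into \eqref{def_HypFnFF} and using $g(\varepsilon)=-1$, $g(\phi)^4=p^2$ does give
${}_4F_3(\phi,\phi,\phi,\phi;\varepsilon,\varepsilon,\varepsilon \mid 1)_p = \frac{-1}{p^2(p-1)}\sum_{\chi} g(\phi\chi)^4 g(\bar{\chi})^4$,
and your overall skeleton --- character sum, then point count on a Calabi--Yau threefold, then modularity --- is in fact the skeleton of the Ahlgren--Ono argument, where the relevant variety is the rigid Calabi--Yau threefold $x+1/x+y+1/y+z+1/z+w+1/w=0$.

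However, both substantive steps of your middle section are gaps rather than steps. (i) The passage from $\sum_{\chi} g(\phi\chi)^4 g(\bar{\chi})^4$ to a point count: orthogonality yields an eight-variable sum over the torus subject to $x_1x_2x_3x_4=y_1y_2y_3y_4$, and converting this into $\#V(\mathbb{F}_p)$ for an explicitly identified threefold $V$ --- with every degenerate character term ($\chi\in\{\varepsilon,\phi\}$) and every Tate and resolution class tracked exactly --- is precisely the multi-page computation carried out in \cite{AO, AO2}; it is where the summand $+p$ comes from, and you defer it entirely. Moreover, your claim that the sum ``pins $V$ down as a smooth model of a double octic, equivalently a Schoen-type fiber product'' is unsupported: those are not equivalent notions, and neither identification is the one used (or needed) in \cite{AO2}. (ii) The modularity input: knowing that $V$ is rigid with bad reduction only at $2$ does \emph{not} give a newform of level dividing $8$. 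A priori the conductor is a power of $2$ that may well exceed $8$, so you cannot conclude the form is $g$ merely because $\dim S_4(\Gamma_0(8))=1$; you need either a Faltings--Serre/Livn\'e comparison with $g$ at finitely many small primes, or what \cite{AO2} actually does, namely reduce the point count to sums of traces $a_p(\lambda)$ of the Legendre family and apply the Eichler--Selberg/Hijikata trace formula together with Schoof's theorem on isomorphism classes with prescribed torsion. (A general modularity theorem for rigid Calabi--Yau threefolds was in any case unavailable when \cite{AO} was written; it exists now, but the conductor issue remains.) That trace formula route is exactly the one this paper uses for its own analogue, Theorem~\ref{thm_3F2_32}, in Sections~\ref{sec_trace}--\ref{sec_proofs}, and it is a good model of what a complete version of your step (ii) must contain.
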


\noindent Another result relating ${_{n+1}F_{n}}$ to the Fourier coefficients of an elliptic modular form is due to  Mortenson \cite{M}.

\begin{theorem}[{Mortenson \cite[Prop.~4.2]{M}}] \label{thm_3F2_16}
Consider the unique newform $h \in S_3(\Gamma_0(16), (\tfrac{-4}{\cdot}))$ and the integers $c(n)$ defined by
\[
h(z) = \sum_{n=1}^{\infty}c(n)q^n = q\prod_{m=1}^{\infty}(1-q^{4m})^6.
\]
Then for $p$ an odd prime,
\begin{equation*}
{_{3}F_2} \biggl( \begin{array}{ccc} \phi, & \phi, & \phi \vspace{.02in}\\
\phantom{\phi} & \varepsilon, & \varepsilon \end{array}
\Big| \; 1 \biggr)_{p}
=c(p).
\end{equation*}
\end{theorem}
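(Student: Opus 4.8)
The plan is to unwind definition~\eqref{def_HypFnFF} into an explicit character sum and then to evaluate that sum using the fact that $h$ has complex multiplication by $\mathbb{Q}(i)$. Setting $A_0=A_1=A_2=\phi$, $B_1=B_2=\varepsilon$ and $x=1$, and using $g(\varepsilon)=-1$ with the convention $\chi(0)=0$, the two lower factors together with the trailing $g(\bar\chi)$ collapse to $g(\bar\chi)^3$, while $\chi(-1)^3=\chi(-1)$, so that
\begin{equation*}
{_{3}F_2}\biggl(\begin{array}{ccc}\phi,&\phi,&\phi\\ &\varepsilon,&\varepsilon\end{array}\Big|\;1\biggr)_{p}
=\frac{1}{(p-1)\,g(\phi)^3}\sum_{\chi}g(\phi\chi)^3\,g(\bar\chi)^3\,\chi(-1).
\end{equation*}
Since $(\phi\chi)\bar\chi=\phi\neq\varepsilon$ for every $\chi$, the Gauss--Jacobi relation $g(\phi\chi)g(\bar\chi)=J(\phi\chi,\bar\chi)\,g(\phi)$ holds for \emph{all} $\chi$ (including $\chi=\varepsilon,\phi$); hence $g(\phi)^3$ cancels and no boundary terms survive.

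It then remains to make the Jacobi sum explicit. The substitution $t\mapsto u=t/(1-t)$, together with $\phi(t)=\phi(u)\phi(1+u)$, gives $J(\phi\chi,\bar\chi)=\sum_{u}\phi(u^2+u)\chi(u)$. Cubing, interchanging summation, and using that $\sum_{\chi}\chi(-u_1u_2u_3)$ equals $p-1$ exactly when $u_1u_2u_3=-1$ and $0$ otherwise, I obtain
\begin{equation*}
{_{3}F_2}\biggl(\begin{array}{ccc}\phi,&\phi,&\phi\\ &\varepsilon,&\varepsilon\end{array}\Big|\;1\biggr)_{p}
=\!\!\sum_{u_1u_2u_3=-1}\!\!\phi\bigl((u_1^2+u_1)(u_2^2+u_2)(u_3^2+u_3)\bigr)
=\!\!\sum_{u,v\in\mathbb{F}_p^{*}}\!\!\phi\bigl(uv(u+1)(v+1)(1-uv)\bigr)=:N,
\end{equation*}
the last equality coming from eliminating $u_3=-1/(uv)$. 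Up to elementary terms $N$ is the number of $\mathbb{F}_p$-points on a smooth model of the surface $y^2=uv(u+1)(v+1)(1-uv)$ (as a check, for $p=5$ one finds $N=-6=c(5)$), and the theorem reduces to the assertion $N=c(p)$.

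For $p\equiv3\pmod4$ this is immediate: the bijection $(u,v)\mapsto(1/u,1/v)$ of $\mathbb{F}_p^{*}\times\mathbb{F}_p^{*}$ sends the summand of $N$ to $\phi(-1)$ times itself (one uses $\phi(u)\phi(v)\phi(uv)=1$ and $\phi(uv-1)=\phi(-1)\phi(1-uv)$), so $N=\phi(-1)N=-N$ and $N=0$. This agrees with $c(p)=0$, since such $p$ are inert in $\mathbb{Q}(i)$ and $h$ is a CM newform.

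The main obstacle is the case $p\equiv1\pmod4$, where $\phi(-1)=1$ and the involution is uninformative. Writing $p=a^2+b^2$ with $a$ odd, the CM theory attaches to $h$ a Hecke Gr\"ossencharacter of $\mathbb{Q}(i)$ of infinity type $z\mapsto z^2$, whence $c(p)=\pi^2+\bar\pi^2=2(a^2-b^2)$ for $\pi=a+bi$; one must therefore prove $N=2(a^2-b^2)$. I would fiber the sum as $N=\sum_{w}\phi(w(1-w))\,A(w)$, where $A(w)=\sum_{u}\phi(u(u+1)(u+w))$ is $-a_p$ of the Legendre-type curve $E_w\colon Y^2=X(X+1)(X+w)$, so that $N$ is a weighted sum of Frobenius traces over an elliptic pencil. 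The value of this sum is controlled by the transcendental part of the associated singular K3 surface, a rank-$2$ motive with CM by $\mathbb{Q}(i)$ whose Frobenius trace is the quartic Jacobi sum combination $J(\chi_4,\chi_4)^2+\overline{J(\chi_4,\chi_4)}^2=\pi^2+\bar\pi^2=2(a^2-b^2)$; carrying out this identification, with all signs and the normalization of $\pi$ pinned down, is the crux of the argument. Alternatively, and perhaps more efficiently, one can establish modularity of this motive directly (e.g.\ via Livn\'e's criterion for the K3 surface) and then identify the resulting eigenform with $h$ by matching Fourier coefficients up to the Sturm bound for $S_3(\Gamma_0(16),(\tfrac{-4}{\cdot}))$; the agreement already at $p=5$ indicates this final matching succeeds.
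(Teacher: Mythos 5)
Your formal reduction is correct as far as it goes: unwinding \eqref{def_HypFnFF} with $g(\varepsilon)=-1$, observing that $(\phi\chi)\bar\chi=\phi\neq\varepsilon$ for every $\chi$ so the Gauss--Jacobi relation applies with no boundary terms, and cubing the Jacobi sum does yield
\[
{_{3}F_2} \biggl( \begin{array}{ccc} \phi, & \phi, & \phi \vspace{.02in}\\
\phantom{\phi} & \varepsilon, & \varepsilon \end{array}
\Big| \; 1 \biggr)_{p}
= \sum_{u,v\in\mathbb{F}_p^{*}}\phi\bigl(uv(u+1)(v+1)(1-uv)\bigr)=N,
\]
and your involution argument correctly gives $N=0$ when $p\equiv 3\pmod 4$, matching $c(p)=0$ for the CM form $h$. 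The gap is the case $p\equiv 1\pmod 4$, which is the entire substance of the theorem: you reduce it to the assertion $N=2(a^2-b^2)$ for $p=a^2+b^2$, $a$ odd, and then you do not prove that assertion -- you state two strategies and acknowledge that carrying either out ``is the crux of the argument.'' Neither sketch is close to closing the gap: (i) the ``elementary terms'' relating $N$ to the point count of a smooth model are never written down, and they are not formal, since resolving the singular fibres and controlling the algebraic part of $H^2$ (whose Frobenius eigenvalues are $\pm p$) contribute terms that must cancel exactly; (ii) the claim that the transcendental piece is a rank-two motive with CM by $\mathbb{Q}(i)$ whose Frobenius trace is $\pi^2+\bar{\pi}^2$ is, in substance, the statement being proved, so invoking it without an independent computation is circular; (iii) the Livn\'e/Sturm alternative has the logic backwards: coefficient matching up to the Sturm bound presupposes you already know your object is modular of the right weight, level and nebentypus, and Livn\'e's criterion itself requires attaching a $2$-adic Galois representation to the surface, bounding its ramification, computing its determinant, and verifying traces at a prescribed finite set of primes, none of which is done. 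A single numerical check at $p=5$ does not substitute for any of this. What is missing is precisely the classical quartic Gauss/Jacobi-sum evaluation of this ${}_3F_2(1)$ (recorded by Ono in \cite{O} and underlying Mortenson's proof in \cite{M}).

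For comparison, the present paper does not prove this statement by your route at all: it quotes Mortenson \cite{M}, and the remark following the proof of Theorem~\ref{thm_3F2_32} notes that the paper's own machinery gives an independent proof -- the Hijikata trace formula (Theorem~\ref{thm_Trace16}) evaluates $\textup{Tr}_3(\Gamma_0(16),(\tfrac{-4}{\cdot}),p)=c(p)$ since that space is one-dimensional, while the ${}_3F_2$ is converted via results of Greene and Koike into weighted sums of traces $a_p(\lambda)$ over the Legendre family, which are counted by Schoof's theorem (Theorem~\ref{thm_Schoof}) in terms of class-number sums and matched against the trace formula. That route is elementary in the sense that it needs no motivic or K3 input; if you wish to salvage your approach, the efficient finish is to evaluate $N$ directly by quartic Jacobi sums rather than through geometry.
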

\noindent(Note that the original statements of Theorems \ref{thm_AO} and \ref{thm_3F2_16} were expressed in terms of Greene's hypergeometric function. We  have reformulated them in terms of ${_{n+1}F_{n}}$.)

Our main result below (Theorem~\ref{thm_4F3}) concerns the evaluation ${}_4F_3( \phi, \phi, \phi, \phi; \varepsilon, \varepsilon, \varepsilon | {-1})_p$, which we relate to eigenvalues of a Siegel eigenform of degree $2$.
Before stating this result  we recall some fundamental facts about Siegel modular forms (see \cite{Ev, Ev2, Kl} for further details).

Let $\mathbb{A}^{m\times n}$ denote the set of all $m \times n$ matrices with entries in the set $\mathbb{A}$.  For a matrix $M$ we let ${^tM}$ denote its transpose, and if $M$ has entries in $\mathbb{C}$, we let $\textup{Tr}(M)$ denote its trace and $\textup{Im}(M)$ its imaginary part.
We denote the $r \times r$ identity matrix by $I_r$.  If a matrix $M \in \mathbb{R}^{n \times n}$ is positive definite, then we write $M > 0$, and if $M$ is positive semi-definite, we write $M \geq 0$.
The Siegel half-plane $\mathbb{H}^2$ of degree $2$ is defined by
$$\mathbb{H}^2:=  \left\{ Z \in \mathbb{C}^{2 \times 2} \mid {^tZ} = Z, \textup{Im}(Z)>0\right\}.$$
Let
$$\Gamma^2:=\textup{Sp}_{4}(\mathbb{Z})= \left\{ M \in \mathbb{Z}^{4 \times 4} \mid {^tM}JM = J \right\} ,\qquad J=\TwoByTwoJ{0}{\phantom{-1} I_2}{-I_2}{0},$$
be the Siegel modular group of degree $2$ and let
$$\Gamma^2(q) := \left\{M \in \Gamma^2 \mid M \equiv I_{4} \jmod{q} \right\}$$
be its principal congruence subgroup of level $q\in \mathbb{Z}^{+}$. If $\Gamma^{\prime}$ is a subgroup of $\Gamma^2$ such that $\Gamma^2(q) \subset \Gamma^{\prime}$ for some minimal $q$, then we say $\Gamma^{\prime}$ is a congruence subgroup of degree $2$ and level~$q$.
The modular group $\Gamma^2$ acts on $\mathbb{H}^2$ via the operation
$$M \cdot Z = \left(AZ+B\right) \left(CZ+D\right)^{-1}$$
where $M = \TwoByTwo{A}{B}{C}{D} \in \Gamma^2$, $Z \in \mathbb{H}^2$.
Let $\Gamma^{\prime}$ be a congruence subgroup of degree $2$ and level~$q$. A holomorphic function $F:\mathbb{H}^2 \to \mathbb{C}$ is called a Siegel modular form of degree $2$, weight $k\in \mathbb{Z}^{+}$ and level~$q$ on $\Gamma^{\prime}$ if
\[
F|_kM(Z):=\textup{det}(CZ+D)^{-k} \, F(M \cdot Z)   =  F(Z)
\]
for all $M = \TwoByTwo{A}{B}{C}{D} \in \Gamma^{\prime}$.  We note that the desired boundedness of $F|_k M(Z)$, for any $M \in \Gamma^2$, when $\textup{Im}(Z) - cI_2 \geq 0$, with fixed $c >0$, is automatically satisfied by the Koecher principle.
The set of all such modular forms is a finite dimensional vector space over $\mathbb{C}$, which we denote $M_k^2(\Gamma^{\prime})$.
Every $F \in M_k^2(\Gamma^{\prime})$ has a Fourier expansion of the form
$$F(Z) = \sum_{N \in \mathcal{R}^2} a(N) \exp \left(\tfrac{2 \pi i}{q} \, \textup{Tr}(NZ) \right)$$
where $Z \in \mathbb{H}^2$ and
$$\mathcal{R}^2 = \left\{N=(N_{ij}) \in \mathbb{Q}^{2 \times 2} \mid {^tN} = N \geq 0, N_{ii}, 2 N_{ij} \in \mathbb{Z} \right\}.$$
We call $F \in M_k^2(\Gamma^{\prime})$ a cusp form if $a(N)=0$ for all $N \not> 0$ and denote the space of such forms $S_k^2(\Gamma^{\prime})$.

The Igusa theta constant of degree 2 with characteristic
$m=(m^{\prime},m^{\prime \prime}) \in \mathbb{C}^{1 \times 4}$, $m^{\prime},m^{\prime \prime} \in \mathbb{C}^{1 \times 2}$ is defined by
$$\Theta_m(Z) = \sum_{n \in \mathbb{Z}^{1 \times 2}} \exp \left( \pi i \left\{(n+m^{\prime}) Z \: {^t(n+m^{\prime})} + 2 (n+m^{\prime}) {\:^t m^{\prime \prime}} \right\} \right).$$
If $m=\frac{1}{2} (a,b,c,d)$ then we will write $\Theta \TwoByTwoT{a}{b}{c}{d}$ for $\Theta_m$.

In \cite{vGvS}, van Geemen and van Straten exhibited several Siegel cusp forms of degree $2$ and level $8$ which are products of theta constants.
The principal form of interest to us is
\begin{equation}
F_7(Z):=
\Theta \TwoByTwoT{0}{0}{0}{0}(Z) \cdot \Theta \TwoByTwoT{0}{0}{0}{0}(Z) \cdot \Theta \TwoByTwoT{1}{0}{0}{0}(Z) \cdot \Theta \TwoByTwoT{0}{1}{0}{0}(Z) \cdot \Theta \TwoByTwoT{0}{0}{0}{1}(Z) \cdot \Theta \TwoByTwoT{0}{0}{1}{1}(Z),
\end{equation}
which lies in $S_3^2(\Gamma^2(4,8))$, where
$$\Gamma^2(8) \subset \Gamma^2(4,8):=\left\{ \TwoByTwo{A}{B}{C}{D} \in \Gamma^2(4) \mid \textup{diag}(B) \equiv \textup{diag}(C) \equiv 0 \jmod 8 \right\} \subset \Gamma^2(4).$$
Furthermore, $F_7(Z)$ is an eigenform \cite{vGvS} in the sense that it is a simultaneous eigenfunction for the all Hecke operators acting on $M_3^2(\Gamma^{2}(8))$. (See \cite{Ev} for details on Hecke operators for Siegel forms of degree 2 with level.)
In particular, for $p$ an odd prime we can define the eigenvalue $\lambda(p) \in \mathbb{C}$ of $F_7$ by
$$T(p) \, F_7 = \lambda(p) \, F_7,$$
where $T(p)$ is the Hecke operator of index $p$.

Our main result (Theorem~\ref{thm_4F3}) relies on a connection between the Andrianov $L$-function $L^a(s,F_7)$ of $F_7$ and the tensor product $L$-function $L(s,f_1\otimes f_2)$ of two elliptic newforms, $f_1 \in S_2(\Gamma_0(32))$ and $f_2 \in S_3(\Gamma_0(32), (\tfrac{-4}{\cdot}))$, where
\[
f_1(z) = \sum_{n=1}^\infty a(n) q^n = q \prod_{m=1}^\infty (1-q^{4n})^2 (1-q^{8n})^2
\]
and taking $i = \sqrt{-1}$,
\[
f_2(z) = \sum_{n=1}^{\infty}b(n)q^n
= q + 4iq^3 +2 q^5 -8iq^7 -7q^9 - 4iq^{11} -14 q^{13} + 8i q^{15}+18 q^{17}+ \cdots.
\]
Van Geemen and van Straten~\cite[\S 8.7]{vGvS} conjectured that $L^a(s,F_7)$ and $L(s,f_1 \otimes f_2)$ have essentially the same Euler factors up to a twist of $F_7$ (see the beginning of the proof of Theorem~\ref{thm_4F3} for a precise statement). Our understanding is that this conjecture has been resolved recently by Okazaki~\cite{Ok} (see also ~\cite[\S 1]{Ok2} for additional discussion on this problem).  As a consequence, for any odd prime $p$, we find (see Section~\ref{sec_proofs} for details) that
\begin{equation} \label{lambdap}
  \lambda(p) = \begin{cases}
  a(p)b(p) & \textnormal{if $p \equiv 1 \pmod{8}$,} \\
  -a(p)b(p) & \textnormal{if $p \equiv 5 \pmod{8}$,} \\
  0 & \textnormal{if $p \equiv 3 \pmod{4}$.}
  \end{cases}
\end{equation}

The main result of this paper relates the eigenvalues $\lambda(p)$ to a special value of a ${}_4F_3$-hypergeometric function over $\mathbb{F}_p$, conditional on the conjecture of van Geemen and van Straten.

\begin{theorem}\label{thm_4F3}
Fix the Dirichlet character $\xi(\cdot) = \bigl( \tfrac{2}{\cdot} \bigr)$ modulo $8$.  Assuming the conjecture of van Geemen and van Straten for $F_7$ \cite[\S 8.7]{vGvS}, for any odd prime $p$,
\begin{equation*}
{_{4}F_3} \biggl( \begin{array}{cccc} \phi, & \phi, & \phi, & \phi \vspace{.02in}\\
\phantom{\phi} & \varepsilon, & \varepsilon, & \varepsilon \end{array}
\Big| \; {-1} \biggr)_{p} = \xi(p)\lambda(p).
\end{equation*}
Moreover, when $p\equiv 3 \pmod 4$, $\lambda(p)=0$.
\end{theorem}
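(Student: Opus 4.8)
The plan is to relate both sides to a common Frobenius trace, so I would begin by stripping the definition \eqref{def_HypFnFF} down to a bare Gauss-sum expression. Taking $n=3$ with every $A_i=\phi$ and every $B_j=\varepsilon$, and using $g(\varepsilon)=-1$, $\overline{\varepsilon\chi}=\bar\chi$, $\chi(-1)^4=1$, together with $g(\phi)^4=(\phi(-1)p)^2=p^2$, the sum collapses to
\[
{}_4F_3\bigl(\phi,\phi,\phi,\phi;\varepsilon,\varepsilon,\varepsilon \,\big|\, {-1}\bigr)_p = \frac{-1}{(p-1)\,p^2}\,S, \qquad S:=\sum_{\chi\in\widehat{\mathbb{F}_p^*}} g(\phi\chi)^4\,g(\bar\chi)^4\,\chi(-1).
\]
Everything then reduces to understanding the single character sum $S$, which is where the arithmetic lives.

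Next I would recognize $S$ as a multiple of the trace of Frobenius on a rank-$4$ motive. Applying the Hasse--Davenport product relation and rewriting the products of Gauss sums as Jacobi sums converts $S$ into a point-count expression for a specific monomial deformation (a hypergeometric, Calabi--Yau type motive of Hodge data $(1,1,1,1)$) specialized at parameter $-1$. The essential input from the Siegel side is van Geemen--van Straten's identification of the spin motive attached to the theta-product eigenform $F_7$: I would invoke their work to match this rank-$4$ motive with the Galois representation whose Frobenius trace is $\lambda(p)$. Because $F_7$ has level $8=2^3$, the motive over $\mathbb{Q}$ computing $S$ should be the quadratic twist by $\xi=\bigl(\tfrac{2}{\cdot}\bigr)$ of the motive attached to $F_7$, so matching traces produces the factor $\xi(p)$ and hence the identity ${}_4F_3(\dots\mid{-1})_p=\xi(p)\lambda(p)$.

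For the final assertion I would argue directly on $S$, independently of the geometry. Replacing $\chi$ by $\phi\bar\chi$ is a bijection of $\widehat{\mathbb{F}_p^*}$ under which $g(\phi\chi)^4\mapsto g(\bar\chi)^4$, $g(\bar\chi)^4\mapsto g(\phi\chi)^4$, and $\chi(-1)\mapsto\phi(-1)\,\chi(-1)$, so that $S=\phi(-1)\,S$. When $p\equiv3\pmod4$ we have $\phi(-1)=-1$, forcing $S=0$ and hence ${}_4F_3(\dots\mid{-1})_p=0$; since $\xi(p)=\pm1\neq0$, the main identity then gives $\lambda(p)=0$. This vanishing reflects the expected inner twist $\rho\cong\rho\otimes\bigl(\tfrac{-4}{\cdot}\bigr)$ of the spin representation of $F_7$, and it is special to the argument $-1$ (the analogous substitution at $x=1$ produces no sign, consistent with the nonzero value $d(p)+p$ in Theorem~\ref{thm_AO}).

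The main obstacle is the identification carried out in the second paragraph: establishing \emph{unconditionally} that $\lambda(p)$ is exactly the Frobenius trace on the hypergeometric motive, rather than merely matching finitely many primes. Here I would lean on van Geemen--van Straten's explicit Hecke computation for the theta product, combined with the construction of Galois representations attached to degree-$2$ Siegel eigenforms, and the delicate work will be pinning down the quadratic twist $\xi$ precisely and discarding any trivial, non-cuspidal contributions (the analogue of the spurious $+p$ appearing in Theorem~\ref{thm_AO}) so that the cuspidal trace matches $S$ on the nose.
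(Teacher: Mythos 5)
Your opening reduction and your vanishing computation are correct: with all $A_i=\phi$, $B_j=\varepsilon$, the definition \eqref{def_HypFnFF} does collapse to ${}_4F_3(\phi,\phi,\phi,\phi;\varepsilon,\varepsilon,\varepsilon\,|\,{-1})_p=-S/\bigl((p-1)p^2\bigr)$ with $S=\sum_{\chi}g(\phi\chi)^4 g(\bar{\chi})^4\chi(-1)$, and the substitution $\chi\mapsto\phi\bar{\chi}$ does give $S=\phi(-1)S$, hence $S=0$ for $p\equiv 3\pmod 4$. That vanishing argument is in fact more elementary and direct than the paper's, which deduces it from a finite-field analogue of Whipple's transformation (Theorem~\ref{thm_4F3_3F2}). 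Note, though, that in your scheme the conclusion $\lambda(p)=0$ still passes through the main identity, whereas the paper obtains $\lambda(p)=0$ for $p\equiv 3\pmod 4$ independently of any hypergeometric input (from the Euler-factor comparison, or from the fact that $f_1$ has CM so $a(p)=0$).

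The genuine gap is your second paragraph, which is the entire content of the theorem. Van Geemen and van Straten did not identify the spin motive or Galois representation attached to $F_7$; they \emph{conjectured}, on numerical evidence for small primes, that $L^a(s,F_7)$ agrees up to a twist with the tensor product $L$-function $L(s,f_1\otimes f_2)$ of two elliptic newforms of level $32$, and the unconditional statement is due to Okazaki. So the ``essential input from the Siegel side'' you invoke does not exist in the form you need: you have correctly located the hard step and then assumed it, as your own final paragraph concedes. Moreover, even granting Okazaki's theorem, what one obtains by comparing Euler factors is $\lambda(p)=\xi(p)\,a(p)b(p)$ for $p\equiv 1\pmod 4$ (the twist $\xi$ pinned down using the values at $p=5,17$ tabulated by van Geemen--van Straten), which is a statement about Fourier coefficients of elliptic newforms, not about your sum $S$. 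The remaining, substantial work of the paper is precisely to connect $a(p)b(p)$ to $S$: (i) Ono's evaluation ${}_2F_1(\phi,\phi;\varepsilon\,|\,{-1})_p=a(p)$; (ii) the new Theorem~\ref{thm_3F2_32}, ${}_3F_2(\chi_4,\phi,\phi;\varepsilon,\varepsilon\,|\,1)_p=b(p)$, proved via the Eichler--Selberg trace formula together with counting isomorphism classes of Legendre curves with prescribed $2$-power torsion; and (iii) the finite-field Whipple transformation and Greene's transformations, which factor the ${}_4F_3$ at $-1$ as the product of that ${}_2F_1$ and ${}_3F_2$. Your proposal contains no substitute for (ii) or (iii): rewriting $S$ as a point count on a rank-$4$ ``hypergeometric motive'' and matching it to $F_7$ beyond finitely many primes is exactly what is not available unconditionally, so the proof does not go through as outlined.
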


Now the Fourier coefficients $a(p)$ and $b(p)$ can themselves be related to hypergeometric functions over $\mathbb{F}_p$.  The first such connection is due to Ono, and the second is established in the present paper.

\begin{theorem}[{Ono~\cite[Thm.~2]{O}}] \label{thm_2F1_32}
For $p$ an odd prime,
\[
{_{2}F_1} \biggl( \begin{array}{ccc} \phi, & \phi \vspace{.02in}\\
\phantom{\chi_4} & \varepsilon \end{array}
\Big| \; {-1} \biggr)_{p}
=a(p).
\]
\end{theorem}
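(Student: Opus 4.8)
The plan is to realize ${}_2F_1\bigl(\phi,\phi;\varepsilon\mid{-1}\bigr)_p$ as the trace of Frobenius of a single elliptic curve and then to invoke modularity. First I would substitute $A_0=A_1=\phi$ and $B_1=\varepsilon$ into the definition \eqref{def_HypFnFF}. Using $g(\varepsilon)=-1$, the relation $g(\chi)g(\bar\chi)=\chi(-1)\,p$ for $\chi\neq\varepsilon$, and the Hasse--Davenport product formula to simplify the factors $g(\phi\chi)$, I would collapse the sum over $\chi\in\widehat{\mathbb{F}_p^*}$. The aim of this step is to convert the Gauss-sum expression into an elementary character sum; I expect to land on a constant multiple of $\sum_{t\in\mathbb{F}_p}\phi(t)\,\phi(1-t)\,\phi(1+t)$, the shape characteristic of the Legendre family evaluated at the parameter $-1$.

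Next I would interpret this character sum geometrically. For $\lambda\notin\{0,1\}$ the Legendre curve $E_\lambda\colon y^2=x(x-1)(x-\lambda)$ satisfies $\#E_\lambda(\mathbb{F}_p)=p+1+\sum_{x}\phi\bigl(x(x-1)(x-\lambda)\bigr)$, so its trace of Frobenius is $a_p(E_\lambda)=-\sum_{x}\phi\bigl(x(x-1)(x-\lambda)\bigr)$. Specializing to $\lambda=-1$ gives $E_{-1}\colon y^2=x(x-1)(x+1)=x^3-x$, and since $\phi(t)\phi(1-t)\phi(1+t)=\phi(t-t^3)=\phi(-1)\,\phi(t^3-t)$, the sum above is exactly $\phi(-1)$ times the Frobenius sum for $E_{-1}$. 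After tracking the sign factors $\chi(-1)$ and the power of $p$ forced by the normalization, I expect to obtain ${}_2F_1\bigl(\phi,\phi;\varepsilon\mid{-1}\bigr)_p=a_p(E_{-1})$.

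It then remains to match $a_p(E_{-1})$ with the Fourier coefficient $a(p)$. The curve $y^2=x^3-x$ has $j$-invariant $1728$, conductor $32$, and complex multiplication by $\mathbb{Z}[i]$, and its associated weight-$2$ newform is precisely $f_1(z)=\eta(4z)^2\eta(8z)^2=q\prod_{m\geq1}(1-q^{4m})^2(1-q^{8m})^2$. By the modularity of CM elliptic curves---equivalently, via the classical theory of Hecke characters attached to $\mathbb{Z}[i]$---we have $a_p(E_{-1})=a(p)$ for every odd prime $p$, which completes the proof. As a built-in consistency check, when $p\equiv3\pmod4$ the prime $p$ is inert in $\mathbb{Z}[i]$, so $E_{-1}$ is supersingular and $a(p)=0$; correspondingly $\phi(-1)=-1$, and the substitution $t\mapsto-t$ forces $\sum_t\phi(t^3-t)$ to equal its own negative.

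The main obstacle lies entirely in the first step: carrying the normalization of \eqref{def_HypFnFF} through to the clean character sum without mishandling the $\chi(-1)$ signs, the factors $g(\phi)^{-2}$, and the overall power of $p$. The degenerate choice $B_1=\varepsilon$ (so that $g(\bar{B_1\chi})=g(\bar\chi)$) deserves particular care, since the single summand $\chi=\varepsilon$ does not obey $g(\chi)g(\bar\chi)=\chi(-1)p$ and must be treated separately to avoid a spurious boundary term. Once this reduction is pinned down, the geometric input and the modularity of the CM curve are standard.
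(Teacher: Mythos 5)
Your proposal is sound, but there is nothing in the paper to compare it against: the statement is quoted from Ono \cite{O} (reformulated from Greene's normalization into the normalized ${}_{n+1}F_{n}$ of \eqref{def_HypFnFF}), and the paper supplies no proof of it --- unlike the companion ${}_3F_2$ result (Theorem \ref{thm_3F2_32}), which the paper proves via the Eichler--Selberg trace formula. Your three pillars all check out, and in fact your route is essentially the classical one underlying Ono's theorem. For the reduction step: writing $g(\phi\chi)g(\bar\chi)/g(\phi) = J(\phi\chi,\bar\chi)$, an identity which persists even at the two degenerate characters $\chi = \varepsilon$ and $\chi = \phi$ (both sides equal $-1$), one gets
\[
{_{2}F_1} \biggl( \begin{array}{cc} \phi, & \phi \vspace{.02in}\\
\phantom{\phi} & \varepsilon  \end{array}
\Big| \; {-1} \biggr)_{p}
= \frac{-1}{p-1}\sum_{\chi} J(\phi\chi,\bar{\chi})^2\,\chi(-1),
\]
so the boundary terms you flag as the main obstacle cause no correction at all; expanding $J^2$ as a double sum and applying orthogonality collapses everything onto the locus $-st=(1-s)(1-t)$, and the substitution $s=(u+1)/2$ turns the resulting single sum into $\sum_u \phi(u^3-u) = -a_p(E_{-1})$, yielding ${}_2F_1(\phi,\phi;\varepsilon \mid {-1})_p = a_p(E_{-1})$ exactly (the normalization \eqref{def_HypFnFF} differs from Greene's function by a factor $-p$ here, which is precisely what absorbs the $1/p$ in Koike's formula, so no stray power of $p$ survives). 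Your geometric and modular inputs are also correct: $E_{-1}\colon y^2=x^3-x$ has conductor $32$ and CM by $\mathbb{Z}[i]$, its attached newform is $f_1=\eta(4z)^2\eta(8z)^2$, so $a_p(E_{-1})=a(p)$, and your supersingularity check for $p\equiv 3 \pmod 4$ is consistent with the sign $\phi(-1)=-1$. In short: the proof plan works and fills in a proof the paper delegates to the literature, but it reconstructs the known argument rather than giving a genuinely different one; the only refinement worth recording is that the $\chi\in\{\varepsilon,\phi\}$ terms need no special treatment for these particular parameters.
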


\begin{theorem}\label{thm_3F2_32}
If $p \equiv 1 \pmod 4$ is prime and $\chi_4 \in \widehat{\mathbb{F}_p^*}$ has order $4$, then
\begin{equation*}
{_{3}F_2} \biggl( \begin{array}{ccc} \chi_4, & \phi, & \phi \vspace{.02in}\\
\phantom{\chi_4} & \varepsilon, & \varepsilon \end{array}
\Big| \; 1 \biggr)_{p}
=b(p).
\end{equation*}
\end{theorem}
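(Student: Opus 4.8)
The plan is to convert the left-hand side into an explicit Gauss (equivalently Jacobi) sum and match it, Euler factor by Euler factor, against the Galois representation attached to $f_2$. Since $p \equiv 1 \pmod 4$ the quartic character $\chi_4$ exists and satisfies $\chi_4^2 = \phi$; this is exactly the hypothesis that makes both sides meaningful, and (because the nebentypus forces $\overline{b(p)} = \bigl(\tfrac{-4}{p}\bigr)\,b(p)$) that makes $b(p)$ real. Expanding ${}_3F_2(\chi_4,\phi,\phi;\varepsilon,\varepsilon\,|\,1)_p$ via the definition \eqref{def_HypFnFF}, I would first apply the reflection relation $g(\psi)g(\bar\psi)=\psi(-1)p$ for $\psi\neq\varepsilon$ together with the Hasse--Davenport product relation, which for the quadratic character reads $g(\psi)\,g(\psi\phi)=\bar\psi(4)\,g(\psi^2)g(\phi)$ and, taken with $\psi=\chi_4$, directly couples the quartic parameter to the doubled $\phi$-parameters. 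The aim of these manipulations is to collapse the sum over $\widehat{\mathbb{F}_p^*}$ into a single geometrically meaningful character sum $S(p)$ over $\mathbb{F}_p$.

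Next I would identify $S(p)$ as a trace of Frobenius. The quartic parameter $\chi_4$ signals that the natural carrier is a $\mathbb{Z}/4$-cover, so I expect $S(p)$ to be a sum of Jacobi sums $J(\chi_4\psi,\dots)$ counting points on an explicit (super)elliptic curve or the K3/abelian surface it dominates. Concretely the steps are: (i) reduce the hypergeometric sum to $S(p)$; (ii) exhibit a smooth projective variety $X/\mathbb{Q}$ whose $p$-th point count realizes $S(p)$ as $\operatorname{tr}(\mathrm{Frob}_p\mid M)$ on a two-dimensional motive $M$ cut out of $H^\ast(X)$, pure of motivic weight $2$ (consistent with $|b(p)|\le 2p$); and (iii) show that $M$ is the motive of $f_2$. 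For (iii) it should suffice to check that $M$ has the correct Hodge type, conductor $32$, and nebentypus $\bigl(\tfrac{-4}{\cdot}\bigr)$, and then to invoke a Faltings--Serre/Livn\'e-style rigidity argument (or a finite verification of Euler factors) to force $\operatorname{tr}(\mathrm{Frob}_p\mid M)=b(p)$ for $p\equiv 1\pmod 4$.

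The main obstacle is precisely step (iii). Because $f_2$ is \emph{not} a CM form --- its coefficients at inert primes $p\equiv 3\pmod 4$, such as $b(3)=4i$, are nonzero --- there is no elementary closed formula for $b(p)$ to match against, so the identification must be genuinely structural rather than a direct Jacobi-sum evaluation as in the CM cases underlying Ono's Theorem~\ref{thm_2F1_32} and Mortenson's Theorem~\ref{thm_3F2_16}. I therefore expect the heart of the work to be producing the variety $X$ and controlling its behavior at the bad primes well enough to apply the rigidity input. Two subsidiary points must also be dispatched: the sign and normalization constants introduced by the Gauss-sum relations must be tracked so that the match is to $b(p)$ exactly; and independence from the choice of $\chi_4$ must be verified, since replacing $\chi_4$ by $\bar\chi_4=\chi_4^3$ should leave the value unchanged (in accordance with $b(p)\in\mathbb{R}$) and ought to fall out of the symmetry of the Jacobi-sum expression for $S(p)$. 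A cleaner route, if one exists, would be to find a finite-field hypergeometric transformation --- again assembled from the Hasse--Davenport relations --- connecting ${}_3F_2(\chi_4,\phi,\phi;\varepsilon,\varepsilon\,|\,1)_p$ to the already-modular evaluations above, but whether such a transformation can reach a non-CM form is itself the delicate question.
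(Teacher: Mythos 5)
Your plan is a strategy outline rather than a proof, and the gap sits exactly where you place ``the heart of the work'': step (iii) is never performed, and it is not a routine verification. You do not exhibit the variety $X$, you do not actually carry out the Gauss--sum reduction to an explicit $S(p)$, and you do not verify the hypotheses of a Faltings--Serre/Livn\'e argument. The last point is serious: to invoke such rigidity you need two continuous $2$-adic representations with controlled ramification at $2$, traces in a common coefficient ring, and agreement on an explicitly determined finite set of primes. Here $f_2$ is non-CM with $b(n)\in\mathbb{Z}[i]$, so its Galois representation has coefficients in a quadratic extension, and the putative motive $M$ (cut out of $H^{*}(X)$ by a $\mathbb{Z}/4$-action) would as well; moreover controlling $M$ at the bad prime $2$ requires knowing $X$ and its reduction, which you have not produced. ``Check Hodge type, conductor and nebentypus, then invoke rigidity'' names the theorem one would like to apply but conceals all of the content; and a finite check of Euler factors is only sufficient \emph{after} those ramification and residual hypotheses are in place. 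As written, nothing in the proposal could be executed without essentially writing a different and substantially harder paper.

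For comparison, the paper's proof sidesteps motives and rigidity entirely, which is precisely what makes the non-CM nature of $f_2$ harmless. It first shows (Lemma~\ref{lem_Fourier}) that $S^{\new}_3(\Gamma_0(32),(\tfrac{-4}{\cdot}))$ is spanned by $f_2$ and its conjugate, so that $b(p)=\tfrac12\,\textup{Tr}^{\new}_3(\Gamma_0(32),(\tfrac{-4}{\cdot}),p)$ for $p\equiv 1\pmod 4$; the trace is then computed by Hijikata's form of the Eichler--Selberg trace formula (Corollary~\ref{cor_Trace32New}) as an explicit sum of class numbers $h^{*}$. On the hypergeometric side, Greene's and Koike's theorems convert the ${}_3F_2$ into $-\sum_{\lambda} a_p(E_\lambda)\,\chi_4(\lambda(\lambda-1))\,\phi(\lambda-1)$ over the Legendre family; twisting and $2$-isogeny lemmas (Lemmas~\ref{lem_ap3}--\ref{lem_ap6}) reduce this to a sum of $a_p(\lambda)$ over $\lambda$ with $\chi_4(\lambda)=1$, $\phi(\lambda-1)=-1$; Lemma~\ref{lem_8tors} identifies those $\lambda$ with curves whose rational points contain $\mathbb{Z}/2\mathbb{Z}\times\mathbb{Z}/8\mathbb{Z}$ but not $\mathbb{Z}/4\mathbb{Z}\times\mathbb{Z}/4\mathbb{Z}$; and Schoof's theorem (Theorem~\ref{thm_Schoof}) converts that count into Hurwitz class numbers $H^{*}$. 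The proof closes by matching the two class-number expressions using Lemma~\ref{lem_hrel}. Thus the Hecke eigenvalue is reached through the trace formula rather than through any modularity statement about a motive --- exactly the input your outline cannot supply.
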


The proof of Theorem~\ref{thm_3F2_32} follows lines of inquiry similar to those of Ahlgren and Ono \cite{AO2}, Ahlgren \cite{A2}, and Frechette, Ono, and the second author \cite{FOP}.
We use the Eichler-Selberg trace formula for Hecke operators to isolate the Fourier coefficients of the form and connect these traces to hypergeometric values by counting isomorphism classes of members of the Legendre family of elliptic curves with prescribed torsion.

The final step in proving Theorem~\ref{thm_4F3} is to appeal to a result of the first author \cite[Thm.~1.5]{McC6} (see Theorem~\ref{thm_4F3_3F2}), which provides a finite field version of a well-poised ${}_4F_3$-hypergeometric identity of Whipple.  From this we deduce that ${}_4F_3( \phi, \phi, \phi, \phi; \varepsilon, \varepsilon, \varepsilon | {-1})_p = 0$ for $p \equiv 3\pmod{4}$ and that for $p \equiv 1\pmod{4}$,
\begin{equation}\label{for_4F3_3F22F1}
  {_{4}F_3} \biggl( \begin{array}{cccc} \phi, & \phi, & \phi, & \phi \vspace{.02in}\\
\phantom{\phi} & \varepsilon, & \varepsilon, & \varepsilon \end{array}
\Big| \; {-1} \biggr)_{p}
= {_{2}F_1} \biggl( \begin{array}{cc} \phi, & \phi \vspace{.02in}\\
\phantom{\phi} & \varepsilon,  \end{array}
\Big| \; {-1} \biggr)_{p}
 \cdot
{_{3}F_2} \biggl( \begin{array}{ccc} \chi_4, & \phi, & \phi \vspace{.02in}\\
\phantom{\chi_4} & \varepsilon, & \varepsilon \end{array}
\Big| \; 1 \biggr)_{p}.
\end{equation}
Combining \eqref{lambdap}, Theorems \ref {thm_2F1_32} and \ref{thm_3F2_32}, and \eqref{for_4F3_3F22F1} yields the desired result.

The remainder of this paper is organized as follows.
In Section \ref{sec_classno} we recall some properties of class numbers of orders of imaginary quadratic fields and outline their relationship to isomorphism classes of elliptic curves over $\mathbb{F}_p$.
Section \ref{sec_trace} outlines our use of  the Eichler-Selberg trace formula. Properties of the trace of Frobenius of the Legendre family of elliptic curves are developed in Section \ref{sec_ec}.
The proofs of Theorems~\ref{thm_4F3} and~\ref{thm_3F2_32} are contained in Section \ref{sec_proofs}. Finally, we make some closing remarks in Section \ref{sec_remarks}.

\begin{ackn}
The authors are extremely grateful to F.~Rodriguez Villegas for pointing out to them the potential connection between the Siegel eigenforms and hypergeometric function values and in particular for alerting them to the example in \cite{vGvS} as a source for our identities.  The authors especially thank T.~Okazaki for his generosity in sharing his results and answering questions on his forthcoming paper.  The authors also thank R.~Osburn for helpful advice and suggestions.
\end{ackn}


\section{Class Numbers and Isomorphism Classes of Elliptic Curves}\label{sec_classno}

In this section we recall some properties of class numbers of orders of imaginary quadratic fields. In particular we will note their relationship to isomorphism classes of elliptic curves over $\mathbb{F}_p$.
(See \cite{Co, Sc} for further details.)

We first recall some notation.
For $D<0$, $D\equiv 0,1 \pmod 4$, let $\mathcal{O}(D)$ denote the unique  imaginary quadratic order of discriminant $D$.
Let $h(D)=h(\mathcal{O}(D))$ denote the class number of $\mathcal{O}(D)$, i.e., the order of the ideal class group of  $\mathcal{O}(D)$.
Let $\omega(D)=\omega(\mathcal{O}(D)):= \tfrac{1}{2} |\mathcal{O}(D)^{\ast}|$ where $\mathcal{O}(D)^{\ast}$ is the group of units of $\mathcal{O}(D)$.
For brevity, we let $h^{\ast}(D):=h(D)/ \omega(D).$ We also define
\begin{equation}\label{def_BigH}
H(D):= \sum_{\mathcal{O}(D) \subseteq \mathcal{O}^{\prime} \subseteq \mathcal{O}_{\max}} h(\mathcal{O}^{\prime})  \quad \textup{and } \quad H^{\ast}(D):= \sum_{\mathcal{O}(D) \subseteq \mathcal{O}^{\prime} \subseteq \mathcal{O}_{\max}} h^{\ast}(\mathcal{\mathcal{O}^{\prime}})
\end{equation}
where the sums are over all orders $\mathcal{O}^{\prime}$ between $\mathcal{O}(D)$ and the maximal order $\mathcal{O}_{\max}$.
We note that $H^{\ast}(D) = H(D)$ unless $\mathcal{O}_{\max}=\mathbb{Z}[\sqrt{-1}]$ or $\mathbb{Z}\left[\tfrac{-1+\sqrt{-3}}{2}\right]$. In these exceptional cases, $H(D)$ is greater by $\frac{1}{2}$ and $\frac{2}{3}$ respectively, as only the term corresponding to $\mathcal{O}_{\max}$ in each sum differs.

If $\mathcal{O}$ has discriminant $D$ and $\mathcal{O}^{\prime} \subseteq \mathcal{O}$ is an order such that $[\mathcal{O}:\mathcal{O}^{\prime}]=f$, then the discriminant of $\mathcal{O}^{\prime}$ is $f^2 D$. We will need the following lemma which relates class numbers of certain orders.

\begin{lemma}[{\cite[Cor.~7.28]{Co}}] \label{lem_hrel}
Let $\mathcal{O}$ be an order of discriminant $D$ in an imaginary quadratic field, and let $\mathcal{O}^{\prime} \subseteq \mathcal{O}$ be an order with $[\mathcal{O}:\mathcal{O}^{\prime}]=f$. Then
$$ h^{\ast}(\mathcal{\mathcal{O}^{\prime}}) = h^{\ast}(\mathcal{\mathcal{O}})\cdot f \prod_{\substack{l \mid f \\ l \, prime}} \left(1-\left(\frac{D}{l}\right)\frac{1}{l}\right),$$
where $\left(\frac{D}{l}\right)$ is the Kronecker symbol.
\end{lemma}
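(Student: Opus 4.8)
\emph{Proof proposal.} The plan is to reduce this relative statement to the classical formula comparing a single order with the maximal order $\mathcal{O}_{\max}$, and then to take a ratio. Let $d_0$ denote the fundamental discriminant of the field, so that $\mathcal{O}_{\max}$ has discriminant $d_0$ and every order is determined by its conductor $c$, having discriminant $c^2 d_0$. The key input is the single-order formula
$$
h^{\ast}(\mathcal{O}) = h^{\ast}(\mathcal{O}_{\max}) \cdot c \prod_{\substack{l \mid c \\ l \, \mathrm{prime}}} \left(1 - \left(\frac{d_0}{l}\right)\frac{1}{l}\right)
$$
for an order $\mathcal{O}$ of conductor $c$. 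The reason for phrasing everything in terms of $h^{\ast} = h/\omega$ rather than $h$ is precisely that the unit-index factor $[\mathcal{O}_{\max}^{\ast} : \mathcal{O}^{\ast}]$ appearing in the usual formula for $h(\mathcal{O})$ is cancelled by the normalization $\omega = \tfrac{1}{2}|\mathcal{O}^{\ast}|$, leaving the clean multiplicative expression above.

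Granting this, I would argue as follows. Let $c$ be the conductor of $\mathcal{O}$, so that $D = c^2 d_0$; since $[\mathcal{O}:\mathcal{O}^{\prime}]=f$, the order $\mathcal{O}^{\prime}$ has conductor $cf$ and discriminant $(cf)^2 d_0 = f^2 D$, consistent with the remark preceding the lemma. Applying the displayed formula to both $\mathcal{O}$ and $\mathcal{O}^{\prime}$ and dividing gives
$$
\frac{h^{\ast}(\mathcal{O}^{\prime})}{h^{\ast}(\mathcal{O})} = f \prod_{\substack{l \mid f \\ l \nmid c}} \left(1 - \left(\frac{d_0}{l}\right)\frac{1}{l}\right),
$$
since every prime dividing $c$ contributes a common factor to numerator and denominator that cancels, leaving only the primes dividing $f$ but not $c$.

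It then remains to rewrite the product over $\bigl(\tfrac{d_0}{l}\bigr)$ as a product over $\bigl(\tfrac{D}{l}\bigr)$ ranging over all primes $l \mid f$. For a prime $l \mid f$ with $l \mid c$ we have $l^2 \mid D = c^2 d_0$, so $\bigl(\tfrac{D}{l}\bigr) = 0$ and the corresponding factor of $\prod_{l\mid f}\bigl(1 - (\tfrac{D}{l})\tfrac{1}{l}\bigr)$ equals $1$; these are exactly the primes omitted on the right-hand side above. For a prime $l \mid f$ with $l \nmid c$, multiplicativity of the Kronecker symbol gives $\bigl(\tfrac{D}{l}\bigr) = \bigl(\tfrac{c^2}{l}\bigr)\bigl(\tfrac{d_0}{l}\bigr) = \bigl(\tfrac{d_0}{l}\bigr)$. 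Hence $\prod_{l\mid f}\bigl(1 - (\tfrac{D}{l})\tfrac{1}{l}\bigr) = \prod_{l\mid f,\, l\nmid c}\bigl(1 - (\tfrac{d_0}{l})\tfrac{1}{l}\bigr)$, and substituting into the ratio yields the claimed identity.

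The only genuinely substantive ingredient is the order-versus-maximal-order formula itself, which is the main obstacle; it is standard and follows from the exact sequence relating $\mathrm{Pic}(\mathcal{O})$ and $\mathrm{Pic}(\mathcal{O}_{\max})$ through the quotient $(\mathcal{O}_{\max}/c\,\mathcal{O}_{\max})^{\ast} / (\mathcal{O}/c\,\mathcal{O})^{\ast}$. Granting it, the remaining work is the elementary Kronecker-symbol bookkeeping described above, together with the observation that $h^{\ast}$ is the normalization that makes the unit factors disappear. Since the statement is quoted directly from \cite[Cor.~7.28]{Co}, in the paper one would simply cite it, but the argument above reconstructs it cleanly from the single-order formula.
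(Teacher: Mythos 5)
Your proposal is correct: the paper gives no proof of this lemma, citing it directly from Cox, and your reconstruction --- rewriting the absolute formula comparing an order to $\mathcal{O}_{\max}$ in terms of $h^{\ast}=h/\omega$ so that the unit index $[\mathcal{O}_{\max}^{\ast}:\mathcal{O}^{\ast}]$ cancels, applying it to both $\mathcal{O}$ and $\mathcal{O}^{\prime}$, taking the ratio, and then absorbing the primes $l \mid c$ using $\bigl(\tfrac{D}{l}\bigr)=0$ and $\bigl(\tfrac{c^2 d_0}{l}\bigr)=\bigl(\tfrac{d_0}{l}\bigr)$ for $l \nmid c$ --- is exactly how the cited Corollary~7.28 is derived from the single-order formula (Theorem~7.24) in Cox's book. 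The only point worth making explicit is that the division is harmless because each factor $1-\bigl(\tfrac{d_0}{l}\bigr)\tfrac{1}{l}$ is nonzero for every prime $l$, which is immediate since the Kronecker symbol takes values in $\{0,\pm 1\}$.
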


Finally we present a result of Schoof which relates these class numbers to the number of isomorphism classes of elliptic curves over $\mathbb{F}_p$.

\begin{theorem}[{Schoof~\cite[(4.5)--(4.9)]{Sc}}] \label{thm_Schoof}
Let $p \geq 5$ be prime. Suppose $n \in \mathbb{Z}^{+}$, $s \in \mathbb{Z}$ satisfy $s^2  \leq 4p$, $p \nmid s$, $n \mid (p-1)$ and $n^2 \mid (p+1-s)$.
Then the number of isomorphism class of elliptic curves over $\mathbb{F}_p$  whose group of $\mathbb{F}_p$-rational points has order $p+1-s$ and contains $\mathbb{Z} / n \mathbb{Z} \times  \mathbb{Z} /  n \mathbb{Z}$ is $H\left(\tfrac{s^2-4p}{n^2}\right)$.
\end{theorem}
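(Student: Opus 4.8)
The plan is to combine the Deuring correspondence for ordinary elliptic curves over $\mathbb{F}_p$ with a translation of the torsion hypothesis into a containment of orders, after which the asserted formula falls out of the definition \eqref{def_BigH} of $H$.

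First I would dispose of the degenerate cases. Since $p \ge 5$, any supersingular curve over $\mathbb{F}_p$ has trace $0$; as $p \nmid s$ forces $s \neq 0$, every curve under consideration is ordinary, and $s^2 = 4p$ would give $p \mid s$, so in fact $0 < s^2 < 4p$. Thus the $p$-power Frobenius $\pi$ of such an $E$ satisfies $\pi^2 - s\pi + p = 0$ with $s^2 - 4p < 0$, and $\mathcal{O}' := \operatorname{End}_{\mathbb{F}_p}(E)$ is an order in the imaginary quadratic field $K = \mathbb{Q}(\sqrt{s^2-4p})$ with $\mathbb{Z}[\pi] \subseteq \mathcal{O}' \subseteq \mathcal{O}_{\max}$, the order $\mathbb{Z}[\pi]$ having discriminant $s^2 - 4p$.

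Next I would reformulate the torsion condition. Because $E(\mathbb{F}_p) = \ker(\pi - 1)$ and $p \nmid n$, the containment $\mathbb{Z}/n\mathbb{Z} \times \mathbb{Z}/n\mathbb{Z} \subseteq E(\mathbb{F}_p)$ is equivalent to $E[n] \subseteq E(\mathbb{F}_p)$, i.e.\ to $\pi$ acting as the identity on $E[n]$, i.e.\ to $\gamma := (\pi - 1)/n$ lying in $\mathcal{O}'$. Using $\operatorname{tr}(\pi) = s$ and $N(\pi) = p$ one computes $\operatorname{tr}(\gamma) = (s-2)/n$ and $N(\gamma) = (p+1-s)/n^2$; the hypotheses $n \mid (p-1)$ and $n^2 \mid (p+1-s)$ give $s \equiv p+1 \equiv 2 \pmod n$ together with $n^2 \mid (p+1-s)$, so $\gamma$ is an algebraic integer and $\mathbb{Z}[\gamma]$ is the order of discriminant $\operatorname{tr}(\gamma)^2 - 4N(\gamma) = (s^2 - 4p)/n^2$. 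Since $\pi = 1 + n\gamma \in \mathbb{Z}[\gamma]$, the torsion hypothesis is precisely the condition $\mathbb{Z}[\gamma] = \mathcal{O}\bigl(\tfrac{s^2-4p}{n^2}\bigr) \subseteq \mathcal{O}'$, which already subsumes $\mathbb{Z}[\pi] \subseteq \mathcal{O}'$.

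Finally I would invoke the class-number count at the core of Deuring's theory: for each fixed order $\mathcal{O}'$ with $\mathbb{Z}[\pi] \subseteq \mathcal{O}' \subseteq \mathcal{O}_{\max}$, the $\mathbb{F}_p$-isomorphism classes of ordinary curves with Frobenius trace $s$ and $\operatorname{End} = \mathcal{O}'$ form a principal homogeneous space under the ideal class group $\operatorname{Cl}(\mathcal{O}')$, hence number exactly $h(\mathcal{O}')$. Summing over precisely those $\mathcal{O}'$ with $\mathcal{O}\bigl(\tfrac{s^2-4p}{n^2}\bigr) \subseteq \mathcal{O}' \subseteq \mathcal{O}_{\max}$ and comparing with \eqref{def_BigH} yields the total $H\bigl(\tfrac{s^2-4p}{n^2}\bigr)$. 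The step I expect to be the main obstacle is establishing this exact $h(\mathcal{O}')$ count: one must verify that reduction of CM curves gives a simply transitive $\operatorname{Cl}(\mathcal{O}')$-action and, more delicately, correctly account for quadratic twists (and, at $j = 0, 1728$, the higher-order twists) so that each trace value $s$ is assigned the right number of classes without mishandling the curves with extra automorphisms---exactly the bookkeeping carried out in Schoof's formulas $(4.5)$--$(4.9)$.
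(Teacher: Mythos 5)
The paper never proves this statement---it is quoted directly from Schoof \cite[(4.5)--(4.9)]{Sc}---so the only comparison available is with Schoof's own argument, and your reconstruction is correct and is essentially that argument: you rule out the supersingular/degenerate cases, translate rational $n$-torsion into integrality of $\gamma = (\pi-1)/n$ (both directions of which are valid, since $\pi-1$ killing $E[n]$ is equivalent to $\pi - 1 \in n\operatorname{End}(E)$), verify that $\mathbb{Z}[\gamma]$ is precisely the order of discriminant $(s^2-4p)/n^2$ containing $\mathbb{Z}[\pi]$, and then sum the Deuring--Waterhouse count of $h(\mathcal{O}')$ isomorphism classes over exactly the orders $\mathcal{O}' \supseteq \mathcal{O}\bigl(\tfrac{s^2-4p}{n^2}\bigr)$, matching the definition \eqref{def_BigH} of $H$. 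The one step you flag as the main obstacle---the simply transitive $\operatorname{Cl}(\mathcal{O}')$-action for fixed trace $s$, with twists and the $j=0,1728$ curves handled correctly---is indeed where the substantive content of Schoof's (4.5) lies, and your observation that the extra twists at those $j$-invariants land in isogeny classes of different trace is the reason the unweighted count $h(\mathcal{O}')$ is exact.
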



\section{Eichler-Selberg Trace Formula}\label{sec_trace}
Let $\textup{Tr}_k(\Gamma_0(N), \chi,p)$ denote the trace of the classical Hecke operator $\rT(p)$ acting on $S_k(\Gamma_0(N), \chi)$.
Similarly let $\textup{Tr}^{\new}_k(\Gamma_0(N), \chi,p)$ denote the trace of  $\rT(p)$ on the new subspace $S^{\new}_k(\Gamma_0(N), \chi)$.
The Eichler-Selberg trace formula gives a precise description of $\textup{Tr}_k(\Gamma_0(N), \chi,p)$.
In this section we use a version of the trace formula due to Hijikata \cite[Thm.~2.2]{HPS} to evaluate $\textup{Tr}_3(\Gamma_0(16), (\tfrac{-4}{\cdot}),p)$ and $\textup{Tr}_3(\Gamma_0(32), (\tfrac{-4}{\cdot}),p)$.

We can simplify the trace formula as given in \cite[Thm.~2.2]{HPS} using the particulars of our spaces and \cite[Lem.~2.5]{HPS}.
This is a long and tedious process but not particularly difficult so we omit the details here. However we note that we have used some elementary facts which we present in the following proposition.
For $p$ an odd prime and $s$ an integer such that $s^2<4p$ we may write $s^2-4p=t^2D$, where $D$ is a fundamental discriminant of some imaginary quadratic field. For a given $p$ and $s$,  let $a:=\ord_2(t)$.
\begin{prop}\label{prop_pDa}
If $s\equiv (p+1) \pmod 8$ then
\begin{itemize}
\item if $p \equiv 1 \pmod 8$ and $D$ is odd, then $a>2$;
\item if $p \equiv 5 \pmod 8$ and $D$ is odd, then $a=2$;
\item if $p \equiv 5 \pmod 8$ and $D$ is even, then $a<2$.
\end{itemize}
\end{prop}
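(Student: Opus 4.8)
The plan is to reduce everything to a direct computation of the $2$-adic valuation of $s^2-4p$ and to match it against the factorization $s^2-4p=t^2D$. First I would observe that, since $p$ is odd, the hypothesis $s\equiv p+1\pmod 8$ forces $s$ to be even; writing $s=2u$ gives $s^2-4p=4(u^2-p)$, so that $\ord_2(s^2-4p)=2+\ord_2(u^2-p)$. Here $u^2-p\neq 0$ because $s^2<4p$ implies $u^2<p$, so this valuation is finite. The single arithmetic fact driving the whole argument is that an odd square is $\equiv 1\pmod 8$.

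Next I would translate the congruence $s\equiv p+1\pmod 8$ into a congruence for $u$ modulo $4$ and read off $\ord_2(u^2-p)$ in each case. When $p\equiv 1\pmod 8$ we get $2u\equiv 2\pmod 8$, hence $u\equiv 1 \pmod 4$ is odd; then $u^2-p\equiv 1-1\equiv 0\pmod 8$, so $\ord_2(u^2-p)\ge 3$ and $\ord_2(s^2-4p)\ge 5$. When $p\equiv 5\pmod 8$ we get $2u\equiv 6\pmod 8$, so again $u$ is odd, but now $u^2-p\equiv 1-5\equiv 4\pmod 8$, giving $\ord_2(u^2-p)=2$ exactly and $\ord_2(s^2-4p)=4$.

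Finally I would combine this with $\ord_2(t^2D)=2a+\ord_2(D)$, using the classification of fundamental discriminants: $\ord_2(D)=0$ when $D$ is odd (as then $D\equiv 1\pmod 4$), while $\ord_2(D)\in\{2,3\}$ when $D$ is even. The three bullets then fall out. For $p\equiv 1\pmod 8$ with $D$ odd, $2a=\ord_2(s^2-4p)\ge 5$ is even, hence $\ge 6$, so $a\ge 3>2$. For $p\equiv 5\pmod 8$ with $D$ odd, $2a=4$, so $a=2$. And for $p\equiv 5\pmod 8$ with $D$ even, $2a=4-\ord_2(D)$, where the parity of $2a$ rules out $\ord_2(D)=3$ and forces $\ord_2(D)=2$, whence $a=1<2$.

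I do not expect a genuine obstacle here: the content is elementary $2$-adic bookkeeping. The only points requiring care are recording the correct possible values of $\ord_2(D)$ for even fundamental discriminants and, in the last case, using the parity of $2a$ to discard the value $\ord_2(D)=3$, which in fact never arises under the stated hypotheses.
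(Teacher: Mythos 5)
Your proof is correct. Note that the paper itself gives no proof of this proposition: it is stated as one of the ``elementary facts'' used in simplifying the Hijikata trace formula, with all details omitted, so there is nothing to compare your argument against; it simply fills the gap the authors left. Each step checks out: $s\equiv p+1\pmod 8$ with $p$ odd forces $s=2u$ even; in both cases $p\equiv 1,5\pmod 8$ the congruence forces $u$ odd, so $u^2\equiv 1\pmod 8$ gives $\ord_2(s^2-4p)\ge 5$ when $p\equiv 1\pmod 8$ and $\ord_2(s^2-4p)=4$ exactly when $p\equiv 5\pmod 8$; and matching against $\ord_2(t^2D)=2a+\ord_2(D)$, with $\ord_2(D)=0$ for odd fundamental discriminants and $\ord_2(D)\in\{2,3\}$ for even ones, yields $a\ge 3$, $a=2$, and $a=1$ respectively, the last after the parity of $2a$ eliminates $\ord_2(D)=3$. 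Your handling of the two delicate points --- the classification of even fundamental discriminants and the parity argument in the third bullet --- is exactly right, and in the third case you in fact prove the sharper statement $a=1$ rather than merely $a<2$.
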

\noindent We have also used \cite[Lem.~4.2]{AO2}. These simplifications yield the following two Theorems.

\begin{theorem}\label{thm_Trace16}
If $p$ is an odd prime, then
\begin{equation*}
\textup{Tr}_3(\Gamma_0(16), (\tfrac{-4}{\cdot}),p)
=\begin{cases}
 0 & \textup{if } p \equiv 3 \jmod 4,\\[6pt]
-6-\displaystyle\sum_{\substack{0< |s| <2 \sqrt{p} \\ s  \equiv p+1 \imod {16}}} s \; \displaystyle\sum_{\substack{f \mid t\\ s^2-4p=t^2D}} h^*\left(\tfrac{s^2-4p}{f^2}\right) c_1(s,f) & \textup{if } p \equiv 1 \jmod 4,
 \end{cases}
\end{equation*}
where $D$ is a fundamental discriminant and $c_1(s,f)$ is described in the table below with  $a:=\ord_2(t)$ and $b:=\ord_2(f)$.

\begin{center}
\begin{tabular}{|r||c|c|c|}
\hline
$c_1(s,f)$ &  $D$ even & $D\equiv 1 \jmod{8}$ & $D\equiv 5 \jmod{8}$\\
\hline
$a-b=0$ & $0$ & $2$ & $0$\\
$a-b=1$ & $0$ & $6$ & $0$\\
$a-b=2$ & $6$ & $8$ & $4$\\
$a-b\geq 3$ & $6$ & $6$ & $6$\\
\hline
\end{tabular}
\end{center}
\end{theorem}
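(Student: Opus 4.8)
The plan is to begin from Hijikata's form of the Eichler--Selberg trace formula \cite[Thm.~2.2]{HPS} applied to $\textup{Tr}_3(\Gamma_0(16),(\tfrac{-4}{\cdot}),p)$ with Hecke index $n=p$, and then reduce it term by term using the particular features of this space. Since the weight is $k=3$ and $p$ is prime (hence not a perfect square), the scalar/identity contribution is absent, and the governing ``elliptic'' part is a sum over integers $s$ with $s^2<4p$ of a weight $\frac{\rho^{\,k-1}-\bar\rho^{\,k-1}}{\rho-\bar\rho}$ times a weighted class-number sum, where $\rho,\bar\rho$ are the roots of $X^2-sX+p$. For $k=3$ this weight collapses to $\rho+\bar\rho=s$, which simultaneously explains the external factor of $s$ in the statement and forces the $s=0$ term to vanish. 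The remaining non-elliptic (hyperbolic and degenerate) pieces, evaluated at $n=p$, $k=3$, level $16$ and character $(\tfrac{-4}{\cdot})$ with $p\equiv 1\pmod 4$, contribute only a constant, which a careful count shows equals $-6$.

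The core of the argument is the evaluation of the local factor at the prime $2$. For each admissible $s$ the elliptic contribution is $\sum_f h^*\!\bigl(\tfrac{s^2-4p}{f^2}\bigr)$ weighted by a $2$-adic count of solutions $x$ of $x^2-sx+p\equiv 0$ modulo the relevant power of $2$, each solution weighted by $(\tfrac{-4}{x})$. This is exactly where \cite[Lem.~2.5]{HPS} and \cite[Lem.~4.2]{AO2} enter. Writing $s^2-4p=t^2D$ with $D$ a fundamental discriminant, one finds that the weighted count depends only on $a=\ord_2(t)$, $b=\ord_2(f)$, and $D\bmod 8$. Carrying out the count produces both the congruence restriction $s\equiv p+1\pmod{16}$ (outside of which the weighted count is empty) and the tabulated values $c_1(s,f)$ indexed by $a-b$ and by whether $D$ is even, $D\equiv 1$, or $D\equiv 5\pmod 8$. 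Proposition~\ref{prop_pDa} is then invoked to match the admissible pairs $(a,D\bmod 8)$ against the residue of $p$ modulo $8$, confirming that the table is correctly populated and the sum correctly indexed.

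For $p\equiv 3\pmod 4$ the trace vanishes by a direct cancellation in the $2$-adic weight. Here $s\equiv p+1\equiv 0\pmod 4$, so the involution $x\mapsto s-x$ acts without fixed points on the solutions of $x^2-sx+p\equiv 0\pmod{2^\nu}$ (all of which are odd) and satisfies $s-x\equiv -x\pmod 4$; since $(\tfrac{-4}{-x})=-(\tfrac{-4}{x})$, the solutions pair up with opposite character weights and every weighted count is zero. A parallel character-sum cancellation annihilates the remaining non-elliptic terms, so $\textup{Tr}_3(\Gamma_0(16),(\tfrac{-4}{\cdot}),p)=0$.

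The main obstacle is the second step: the $2$-adic local computation is genuinely delicate because the level $16=2^4$ requires tracking solutions of $x^2-sx+p\equiv 0$ to a high power of $2$, separating the analysis according to the parity of $D$ and its class modulo $8$, and keeping the weighting by $(\tfrac{-4}{\cdot})$ synchronized with the index $f$. This is the long and tedious reduction we have suppressed; it is precisely the organization of these cases through Proposition~\ref{prop_pDa} and \cite[Lem.~4.2]{AO2} that compresses the raw Hijikata sum into the compact description of $c_1(s,f)$.
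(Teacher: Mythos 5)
Your proposal takes essentially the same approach as the paper: the authors likewise start from Hijikata's version of the Eichler--Selberg trace formula \cite[Thm.~2.2]{HPS}, simplify it using \cite[Lem.~2.5]{HPS}, \cite[Lem.~4.2]{AO2} and Proposition~\ref{prop_pDa}, and explicitly omit the ``long and tedious'' $2$-adic reduction that produces the constant $-6$, the congruence $s \equiv p+1 \pmod{16}$, and the table $c_1(s,f)$. Your sketch is consistent with the statement and even records slightly more structure than the paper does (the weight-$3$ collapse of the elliptic weight to $s$, and the involution cancellation giving vanishing for $p \equiv 3 \pmod 4$).
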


\begin{theorem}\label{thm_Trace32}
If $p$ is an odd prime, then
\begin{equation*}
\textup{Tr}_3(\Gamma_0(32), (\tfrac{-4}{\cdot}),p)
=\begin{cases}
 0 & \textup{if } p \equiv 3 \jmod 4,\\[6pt]
-8-\displaystyle\sum_{\substack{0< |s| <2 \sqrt{p} \\ s  \equiv p+1 \imod {16}}} s \; \displaystyle\sum_{\substack{f \mid t\\ s^2-4p=t^2D}} h^*\left(\tfrac{s^2-4p}{f^2}\right) c_2(s,f) & \textup{if } p \equiv 1 \jmod 4,
 \end{cases}
\end{equation*}
where $D$ is a fundamental discriminant and $c_2(s,f)$ is described in the table below with  $a:=\ord_2(t)$ and $b:=\ord_2(f)$.

\begin{center}
\begin{tabular}{|r||c|c|c|}
\hline
$c_2(s,f)$ &  $D$ even & $D\equiv 1 \jmod{8}$ & $D\equiv 5 \jmod{8}$\\
\hline
$a-b=0$ & $0$ & $2$ & $0$\\
$a-b=1$ & $0$ & $6$ & $0$\\
$a-b=2$ & $4$ & $12$ & $0$\\
$a-b\geq 3$ & $8$ & $8$ & $8$\\
\hline
\end{tabular}
\end{center}
\end{theorem}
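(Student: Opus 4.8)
The plan is to evaluate this trace directly from the Eichler--Selberg trace formula in the form given by Hijikata \cite[Thm.~2.2]{HPS}, specialized to weight $k=3$, level $N=32$, nebentypus $\chi=(\tfrac{-4}{\cdot})$, and Hecke index $n=p$ an odd prime, exactly as in the companion Theorem~\ref{thm_Trace16}. Since $p$ is not a perfect square, the scalar (identity) contribution vanishes, and since $k=3>2$ the term in the formula that is special to weight~$2$ does not occur. What remains is an elliptic contribution, summed over integers $s$ with $s^2<4p$, together with a hyperbolic/parabolic contribution arising from the factorization $p=1\cdot p$. For weight~$3$ the elliptic weight factor attached to a given $s$ is $\tfrac{\rho^2-\bar\rho^2}{\rho-\bar\rho}=\rho+\bar\rho=s$, where $\rho,\bar\rho$ are the roots of $x^2-sx+p$; this is the source of the linear factor $s$ multiplying the inner sum.

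First I would dispose of the constant. The hyperbolic/parabolic terms are governed only by the divisors of $p$ and by local data at $2$ (the unique prime dividing $N$) together with the values of $\chi$; they are independent of the elliptic parameter $s$ and collapse to $-8$ when $p\equiv 1\pmod 4$. Turning to the elliptic part, I would write $s^2-4p=t^2D$ with $D$ a fundamental discriminant, so that the discriminants occurring are $\tfrac{s^2-4p}{f^2}$ as $f$ ranges over divisors of $t$. Hijikata's formula presents each elliptic term as a weighted class number $h^\ast\!\bigl(\tfrac{s^2-4p}{f^2}\bigr)$ times a product of purely local factors over the primes dividing $N$; here the only nontrivial local factor is at $2$, and the congruence $s\equiv p+1\pmod{16}$ restricting the outer sum is precisely the admissibility condition imposed by that local factor for level $2^5$ and a character of conductor~$4$.

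The crux of the argument, and the step I expect to be the main obstacle, is the explicit computation of this local factor at the prime~$2$, where all the case distinctions reside. Using Hijikata's local formula \cite[Lem.~2.5]{HPS} I would count the relevant local optimal embeddings---equivalently, the solutions of $x^2-sx+p\equiv 0$ modulo the appropriate power of $2$---while carrying the value of $\chi=(\tfrac{-4}{\cdot})$ through the count. The outcome depends only on $a:=\ord_2(t)$, $b:=\ord_2(f)$, and the residue of $D$ modulo~$8$ (even, $\equiv 1$, or $\equiv 5$), and it is exactly the table of values $c_2(s,f)$. Proposition~\ref{prop_pDa} is indispensable here: for each residue of $p$ modulo $8$ and each parity of $D$ it pins down the admissible values of $a$, thereby determining which rows and columns of the table are actually realized and excluding the rest; the boundary cases $a-b\in\{0,1,2\}$ are the delicate ones and additionally require the $2$-adic count of \cite[Lem.~4.2]{AO2}. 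Assembling the local factor with the weight factor $s$ and the class numbers $h^\ast$ yields the stated sum.

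Finally, for $p\equiv 3\pmod 4$ I would show the trace vanishes by the same local analysis: in this case the local-at-$2$ factor is identically zero for every admissible $s$ (the congruence and character constraints coming from $(\tfrac{-4}{\cdot})$ are incompatible), so no elliptic term survives, and the constant term likewise vanishes. The whole computation runs in parallel with Theorem~\ref{thm_Trace16}; the only differences are the higher power of $2$ in the level, which shifts the local-at-$2$ analysis, and the resulting changes in both the constant ($-8$ in place of $-6$) and the table entries $c_2(s,f)$.
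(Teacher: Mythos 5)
Your proposal follows exactly the route the paper takes: the authors likewise obtain Theorem~\ref{thm_Trace32} by specializing Hijikata's form of the Eichler--Selberg trace formula \cite[Thm.~2.2]{HPS} to $k=3$, $N=32$, $\chi=(\tfrac{-4}{\cdot})$, computing the local embedding data at the prime $2$ via \cite[Lem.~2.5]{HPS}, and organizing the case analysis with Proposition~\ref{prop_pDa} and \cite[Lem.~4.2]{AO2} (the paper, like your write-up, declares the resulting local computation ``long and tedious'' and omits it). Your identification of the weight factor $s$, the vanishing of the identity and weight-two terms, the divisor-type origin of the constant $-8$, and the cancellation mechanism for $p\equiv 3\pmod 4$ all match the paper's intended argument.
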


The dimension of $S_3(\Gamma_0(16), (\tfrac{-4}{\cdot}))$ is one, so we can combine Theorems \ref{thm_Trace16} and \ref{thm_Trace32} to evaluate $\textup{Tr}^{\new}_3(\Gamma_0(32), (\tfrac{-4}{\cdot}),p)$.

\begin{cor}\label{cor_Trace32New}
If $p$ is an odd prime, then
\begin{equation*}
\textup{Tr}^{\new}_3(\Gamma_0(32), (\tfrac{-4}{\cdot}),p)
=\begin{cases}
 0 & \textup{if } p \equiv 3 \jmod 4,\\[6pt]
4-\displaystyle\sum_{\substack{0< |s| <2 \sqrt{p} \\ s  \equiv p+1 \imod {16}}} s \; \displaystyle\sum_{\substack{f \mid t\\ s^2-4p=t^2D}} h^*\left(\tfrac{s^2-4p}{f^2}\right) c_3(s,f) & \textup{if } p \equiv 1 \jmod 4,
 \end{cases}
\end{equation*}
where $D$ is a fundamental discriminant and $c_3(s,f)$ is described in the table below with  $a:=\ord_2(t)$ and $b:=\ord_2(f)$.

\begin{center}
\begin{tabular}{|r||c|c|c|}
\hline
$c_3(s,f)$ &  $D$ even & $D\equiv 1 \jmod{8}$ & $D\equiv 5 \jmod{8}$\\
\hline
$a-b=0$ & $0$ & $-2$ & $0$\\
$a-b=1$ & $0$ & $-6$ & $0$\\
$a-b=2$ & $-8$ & $-4$ & $-8$\\
$a-b\geq 3$ & $-4$ & $-4$ & $-4$\\
\hline
\end{tabular}
\end{center}
 \end{cor}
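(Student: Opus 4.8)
The plan is to exploit the Atkin--Lehner decomposition of $S_3(\Gamma_0(32), \chi)$ into old and new subspaces, where $\chi = (\tfrac{-4}{\cdot})$ has conductor $4$, so as to express $\textup{Tr}^{\new}_3(\Gamma_0(32), \chi, p)$ entirely in terms of the full traces already computed in Theorems~\ref{thm_Trace16} and~\ref{thm_Trace32}. Since $p$ is odd we have $p \nmid 32$, so every oldform copy $f(dz)$ of a newform $f$ of level $M \mid 32$ shares the same $\rT(p)$-eigenvalue as $f$. Consequently, for each level $N \in \{16, 32\}$,
\begin{equation*}
\textup{Tr}_3(\Gamma_0(N), \chi, p) = \sum_{\substack{M \mid N \\ 4 \mid M}} \sigma_0(N/M)\, \textup{Tr}^{\new}_3(\Gamma_0(M), \chi, p),
\end{equation*}
where the sum runs over divisors $M$ of $N$ that are multiples of $\textup{cond}(\chi) = 4$ and $\sigma_0$ counts divisors.

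First I would record these two expansions explicitly. For $N = 16$ the relevant levels are $M \in \{4, 8, 16\}$ with multiplicities $\sigma_0(4)=3$, $\sigma_0(2)=2$, $\sigma_0(1)=1$, while for $N = 32$ the levels are $M \in \{4, 8, 16, 32\}$ with multiplicities $4, 3, 2, 1$. Writing $d_M := \dim S^{\new}_3(\Gamma_0(M), \chi)$, the level-$16$ relation gives $\dim S_3(\Gamma_0(16), \chi) = 3 d_4 + 2 d_8 + d_{16}$. By hypothesis this dimension equals $1$, and the newform $h$ of Theorem~\ref{thm_3F2_16} forces $d_{16} \geq 1$; hence $d_4 = d_8 = 0$ and $d_{16} = 1$. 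In particular $S^{\new}_3(\Gamma_0(4), \chi)$ and $S^{\new}_3(\Gamma_0(8), \chi)$ vanish, so their traces drop out of both expansions.

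With the lower-level terms gone, the two expansions collapse to $\textup{Tr}_3(\Gamma_0(16), \chi, p) = \textup{Tr}^{\new}_3(\Gamma_0(16), \chi, p)$ and $\textup{Tr}_3(\Gamma_0(32), \chi, p) = 2\,\textup{Tr}^{\new}_3(\Gamma_0(16), \chi, p) + \textup{Tr}^{\new}_3(\Gamma_0(32), \chi, p)$. Eliminating the common new trace at level $16$ yields the clean identity
\begin{equation*}
\textup{Tr}^{\new}_3(\Gamma_0(32), \chi, p) = \textup{Tr}_3(\Gamma_0(32), \chi, p) - 2\,\textup{Tr}_3(\Gamma_0(16), \chi, p).
\end{equation*}

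It then remains to substitute the formulas of Theorems~\ref{thm_Trace16} and~\ref{thm_Trace32}. For $p \equiv 3 \pmod 4$ both traces vanish, giving $0$ as claimed. For $p \equiv 1 \pmod 4$ the constant terms combine as $-8 - 2(-6) = 4$, and since the two sums share the same index set and the same weights $s$ and $h^*(\cdot)$, the coefficient table for $c_3$ is obtained entrywise as $c_3 = c_2 - 2c_1$; a direct comparison against the tables of Theorems~\ref{thm_Trace16} and~\ref{thm_Trace32} confirms every entry. The only genuinely substantive input is the old--new bookkeeping---correctly identifying the contributing levels, their divisor multiplicities, and the vanishing of the level-$4$ and level-$8$ new spaces---so I expect the main (and modest) obstacle to be verifying the multiplicities $\sigma_0(N/M)$ and confirming $d_4 = d_8 = 0$, rather than any analytic difficulty.
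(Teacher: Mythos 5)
Your proposal is correct and takes essentially the same route as the paper: the paper's one-line proof (``the dimension of $S_3(\Gamma_0(16),(\tfrac{-4}{\cdot}))$ is one, so we can combine Theorems~\ref{thm_Trace16} and~\ref{thm_Trace32}'') is precisely the old/new bookkeeping you make explicit, yielding $\textup{Tr}^{\new}_3(\Gamma_0(32),(\tfrac{-4}{\cdot}),p)=\textup{Tr}_3(\Gamma_0(32),(\tfrac{-4}{\cdot}),p)-2\,\textup{Tr}_3(\Gamma_0(16),(\tfrac{-4}{\cdot}),p)$. Your entrywise check $c_3=c_2-2c_1$ and the constant $-8-2(-6)=4$ confirm every value in the stated table.
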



\section{Legendre Family of Elliptic Curves}\label{sec_ec}
In this section we recall some properties of elliptic curves, and in particular properties of the Legendre family of elliptic curves over finite fields. For further details please refer to \cite{Kn, Si}.
We will also develop some preliminary results which we will use in Section \ref{sec_proofs}.

\begin{theorem}[{\cite[Thm.~4.2]{Kn}}]\label{thm_2P}
Suppose $E$ is an elliptic curve over a field $K$, $\textup{char}(K) \neq 2$, $3$, given by
$y^2=(x-a)(x-b)(x-c)$,
with $a$, $b$, $c \in K$ all distinct.
Then, given a point $P=(x,y) \in E(K)$ there exists $Q \in E(K)$ with $P=[2]Q$ if and only if $x-a$, $x-b$ and $x-c$ are all squares in $K$.
\end{theorem}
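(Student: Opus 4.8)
The plan is to prove both implications through a single geometric mechanism: if $P=[2]Q$ then the tangent line to $E$ at $Q$ meets the curve a third time at $-P$, and conversely any halving $Q$ of $P$ must be the point of tangency of a line through $-P$. Write $f(x)=(x-a)(x-b)(x-c)$ and $P=(x_0,y_0)$. First I would dispose of the degenerate case $P=O=[2]O$ and observe that the assertion concerns affine points; if $P\neq O$ and $P=[2]Q$, then $Q$ is not $2$-torsion, so $Q=(u,v)$ satisfies $v\neq 0$ and $u\notin\{a,b,c\}$.

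For the forward direction I would use the tangent line $\ell:\,y=m(x-u)+v$ at $Q$, with $m=f'(u)/(2v)$. Since three collinear points sum to $O$, the line $\ell$ meets $E$ at $Q$ doubly and at the third point $-[2]Q=-P=(x_0,-y_0)$; comparing the monic cubic $f(x)-\bigl(m(x-u)+v\bigr)^2$ with its known roots gives the polynomial identity
\[
f(x)-\bigl(m(x-u)+v\bigr)^2=(x-u)^2(x-x_0).
\]
Evaluating this at $x=a,b,c$, where $f$ vanishes, yields for instance $x_0-a=\bigl((m(a-u)+v)/(a-u)\bigr)^2$, and symmetrically for $b$ and $c$. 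Hence $x_0-a,\,x_0-b,\,x_0-c$ are all squares in $K$, which is the ``only if'' direction.

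For the converse, suppose $x_0-a=\alpha^2$, $x_0-b=\beta^2$, $x_0-c=\gamma^2$ with $\alpha,\beta,\gamma\in K$. Since $(\alpha\beta\gamma)^2=y_0^2$, I can normalize the signs so that $\alpha\beta\gamma=y_0$. Reversing the relations above, the required tangent slope is forced to be $\alpha+\beta+\gamma$, and this in turn pins down the candidate
\[
m=\alpha+\beta+\gamma,\qquad u=a+(\alpha+\beta)(\alpha+\gamma),\qquad v=(\alpha+\beta)(\beta+\gamma)(\gamma+\alpha).
\]
Setting $Q=(u,v)$, I would verify three polynomial identities in $\alpha,\beta,\gamma$ (substituting $a=x_0-\alpha^2$, and so on): that $v^2=f(u)$, so $Q\in E(K)$; that $f(x)-\bigl(m(x-u)+v\bigr)^2=(x-u)^2(x-x_0)$; and that $m(x_0-u)+v=-y_0$, so the third intersection of the tangent at $Q$ with $E$ is exactly $-P$. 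By the geometric definition of the group law this gives $[2]Q=P$. Non-degeneracy is automatic: since $a,b,c$ are distinct, $\alpha^2-\beta^2=b-a\neq0$ forces $\alpha+\beta\neq0$, and likewise $\beta+\gamma,\gamma+\alpha\neq0$, so $v\neq0$ and $u\notin\{a,b,c\}$.

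The cleanest step is the forward direction, where the cubic factorization does all the work once one writes down the tangent line. The hard part will be the converse: first discovering the slope identity $m=\alpha+\beta+\gamma$, and then carrying out the symmetric-function verification that $v^2=f(u)$ together with the tangent-line factorization. These computations are routine but bookkeeping-heavy, and care is needed with the sign normalization $\alpha\beta\gamma=y_0$, which is precisely what selects the correct halving among the sign choices and couples the three square roots to the $y$-coordinate of $P$.
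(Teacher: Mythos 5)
The paper itself offers no proof of this statement --- it is imported verbatim from Knapp \cite[Thm.~4.2]{Kn} --- so there is no internal argument to compare yours against; I can only judge it on its own merits, and it is correct. The key formulas you propose do check out. Writing $e_1=\alpha+\beta+\gamma$, $e_2=\alpha\beta+\beta\gamma+\gamma\alpha$, $e_3=\alpha\beta\gamma$, your candidate point is $Q=(x_0+e_2,\,e_1e_2-e_3)$ with line slope $m=e_1$: one gets $u-a=(\alpha+\beta)(\alpha+\gamma)$ and cyclically, whence $f(u)=v^2$; the identity $(\alpha+\beta)(\beta+\gamma)(\gamma+\alpha)=e_1e_2-e_3$ gives $m(x_0-u)+v=-e_3=-y_0$, so the line passes through $-P$; and the factorization $f(x)-\bigl(m(x-u)+v\bigr)^2=(x-u)^2(x-x_0)$ holds because both sides are monic cubics whose difference is a quadratic vanishing at the three distinct points $a,b,c$ (at $x=a$ both sides equal $-\alpha^2(\alpha+\beta)^2(\alpha+\gamma)^2$, and symmetrically at $b,c$). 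One can even confirm $f'(u)=2ve_1$, so your line genuinely is the tangent at $Q$; but, as you implicitly use, the double root at $x=u$ together with $v\neq 0$ already forces intersection multiplicity $2$ at $Q$, which is all the group law requires. Your non-degeneracy remarks ($\alpha+\beta\neq 0$, etc., from distinctness of $a,b,c$) and the sign normalization $\alpha\beta\gamma=y_0$ are exactly the points where care is needed, and you handle them correctly; note also that the case $y_0=0$ (i.e., $P$ of order $2$) is covered automatically, since then one of $\alpha,\beta,\gamma$ vanishes and the normalization is vacuous, while in the forward direction evaluation at the other two roots still produces the required squares. This is the classical $2$-descent/tangent-line argument, in substance the same computation that underlies Knapp's proof, so nothing further is needed.
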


The Legendre family of elliptic curves $E_{\lambda}$ over $\mathbb{F}_p$ is given by
\begin{equation}\label{def_ELambda}
E_{\lambda}: y^2=x(x-1)(x-\lambda), \qquad \lambda \in \mathbb{F}_p \setminus \lbrace 0, 1 \rbrace \: .
\end{equation}

\begin{prop}\label{prop_Legendrej}
\begin{enumerate}
\item The $j$-invariant of $E_{\lambda}$ is
$$ j(E_{\lambda}) = 2^8 \frac{(\lambda^2 - \lambda +1)^3}{\lambda^2(\lambda-1)^2}.$$
\item $j(E_{\lambda}) = 1728$ if and only if $\lambda \in \{2, -1, \frac{1}{2}\}$.
\item $j(E_{\lambda}) = 0$ if and only if $\lambda^2-\lambda+1 = 0$, i.e., $\lambda \in \{\frac{1 \pm \sqrt{-3}}{2}\}$.
\item The map $\lambda \to j(E_{\lambda})$ is surjective and six-to-one except above $j=0$ and $j=1728$. In particular,
$$ \left\{\lambda, \frac{1}{\lambda}, 1-\lambda, \frac{1}{1-\lambda}, \frac{\lambda}{\lambda-1}, \frac{\lambda-1}{\lambda}\right\} \to j(E_{\lambda}).$$
\end{enumerate}
\end{prop}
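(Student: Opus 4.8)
The plan is to dispatch parts (1)--(3) by the standard computation of Weierstrass invariants, and part (4) by an orbit analysis of the anharmonic group.

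First I would expand the defining cubic to the Weierstrass form $y^2 = x^3 - (1+\lambda)x^2 + \lambda x$ and record the usual quantities $b_2 = -4(1+\lambda)$, $b_4 = 2\lambda$, $b_6 = 0$, $b_8 = -\lambda^2$. From these I compute
\[
 c_4 = b_2^2 - 24 b_4 = 16(\lambda^2-\lambda+1), \qquad c_6 = -b_2^3 + 36 b_2 b_4 - 216 b_6 = 32(\lambda+1)(2\lambda-1)(\lambda-2),
\]
and $\Delta = -b_2^2 b_8 - 8 b_4^3 - 27 b_6^2 + 9 b_2 b_4 b_6 = 16\lambda^2(\lambda-1)^2$. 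Since $\lambda \neq 0,1$ we have $\Delta \neq 0$, so $E_\lambda$ is genuinely an elliptic curve, and part (1) then follows directly from $j = c_4^3/\Delta$, which simplifies to $2^8(\lambda^2-\lambda+1)^3/(\lambda^2(\lambda-1)^2)$.

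For parts (2) and (3) I would invoke the identities $j = c_4^3/\Delta$ and $j - 1728 = c_6^2/\Delta$ (equivalently $c_4^3 - c_6^2 = 1728\Delta$). Because $\Delta \neq 0$, part (3) is immediate: $j=0 \iff c_4 = 0 \iff \lambda^2-\lambda+1 = 0$, i.e. $\lambda = \frac{1\pm\sqrt{-3}}{2}$. Similarly $j = 1728 \iff c_6 = 0$, and the factorization of $c_6$ above shows this holds iff $\lambda \in \{-1,\tfrac12,2\}$, giving part (2). The characteristic restrictions $p \neq 2,3$ (built into the definition of the family) are exactly what keeps these special values distinct and lying in $\mathbb{F}_p \setminus \{0,1\}$.

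For part (4) the key observation is that the six listed substitutions
\[
\lambda, \quad \tfrac{1}{\lambda}, \quad 1-\lambda, \quad \tfrac{1}{1-\lambda}, \quad \tfrac{\lambda}{\lambda-1}, \quad \tfrac{\lambda-1}{\lambda}
\]
are precisely the orbit of $\lambda$ under the group $G$ of fractional linear maps generated by the two involutions $\lambda \mapsto 1-\lambda$ and $\lambda \mapsto 1/\lambda$; this $G$ is isomorphic to $S_3$ and has order $6$. I would verify by direct substitution into the formula of part (1) that $(\lambda^2-\lambda+1)^3/(\lambda^2(\lambda-1)^2)$ is fixed by both generators, hence by all of $G$, so the six values of $\lambda$ share one $j$-invariant (conceptually, they realize the six reorderings of the roots $\{0,1,\lambda\}$ of the cubic, which give isomorphic curves). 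A fibre of $\lambda \mapsto j(E_\lambda)$ is then a union of $G$-orbits, generically of size $6$; the size drops exactly when $\lambda$ is fixed by a nontrivial element of $G$. The involutions fix precisely $\{-1,\tfrac12,2\}$ (orbit size $3$, mapping to $j=1728$), and the order-$3$ elements fix precisely $\frac{1\pm\sqrt{-3}}{2}$ (orbit size $2$, mapping to $j=0$), so the map is exactly six-to-one away from $j = 0,1728$. Surjectivity then follows because $\lambda \mapsto j(E_\lambda)$ extends to a nonconstant degree-$6$ morphism $\mathbb{P}^1 \to \mathbb{P}^1$, which is automatically surjective over $\overline{\mathbb{F}}_p$.

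The step most needing care is part (4): matching each reduced orbit size to the correct exceptional $j$-value, and pinning down the sense of ``surjective.'' Over $\overline{\mathbb{F}}_p$ surjectivity is immediate from the degree-$6$ morphism, but over $\mathbb{F}_p$ itself it can fail (for $p=5$ the only admissible $\lambda \in \{2,3,4\}$ form a single orbit all mapping to $j=1728$), so the statement should be read over the algebraic closure, consistent with the cited results of Knapp and Silverman. Parts (1)--(3) are routine, their only subtlety being the reliance on $p \neq 2,3$ to ensure $\Delta \neq 0$ and the distinctness of the special parameter values.
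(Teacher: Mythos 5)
Your proof is correct. The paper itself offers no proof of this proposition: it is stated as a recollection of standard facts, with the reader referred to Knapp \cite{Kn} and Silverman \cite{Si}, and your argument --- computing $c_4$, $c_6$, $\Delta$ for the Weierstrass model $y^2 = x^3-(1+\lambda)x^2+\lambda x$ and then analyzing orbits of the $S_3$-action generated by $\lambda \mapsto 1-\lambda$ and $\lambda \mapsto 1/\lambda$ --- is precisely the standard one found in those references (cf.\ Prop.~III.1.7 of \cite{Si}); all of your displayed quantities and factorizations check out. Your one substantive point beyond the bare statement is also well taken: surjectivity in part (4) can indeed fail for $\mathbb{F}_p$-rational $\lambda$ (your $p=5$ example is right, since $\{2,3,4\}=\{2,\tfrac{1}{2},-1\}$ in $\mathbb{F}_5$ is a single orbit lying over $j=1728$), so the statement must be read over $\overline{\mathbb{F}}_p$. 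This causes no harm downstream: in the proof of Theorem \ref{thm_3F2_32} the proposition is used only through parts (2)--(4), to show that the map $\lambda \mapsto [E_\lambda]$ on $L(s,p)$ is two-to-one and that no curve in $I(s,p)$ has $j=0$ or $1728$; the surjectivity of $j$ is never invoked.
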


If $t \in \mathbb{F}_p\setminus \{0\}$, then we define the $t$-quadratic twist of $E_{\lambda}$ by $E^t_\lambda: y^2 = x(x-t)(x-t\lambda).$
If $t$ is a square in $\mathbb{F}_p$ then $E_{\lambda}$ is isomorphic to $E^t_\lambda$ over $\mathbb{F}_p$.  Straightforward calculations yield the following result.

\begin{prop}\label{prop_Etwist}
Let $p\geq5$ be prime.
\textup{(1)} $E_{\lambda}$ is the $\lambda$ quadratic twist of  $E_{\frac{1}{\lambda}}$.
\textup{(2)} $E_{\lambda}$ is the $-1$ quadratic twist of $E_{1-\lambda}$.
\textup{(3)} $E_{\lambda}$ is the $1-\lambda$ quadratic twist of $E_{\frac{\lambda}{\lambda-1}}$.
\textup{(4)} $E_{\lambda}$ is the $\lambda-1$ quadratic twist of $E_{\frac{1}{1-\lambda}}$.
\textup{(5)} $E_{\lambda}$ is the $-\lambda$ quadratic twist of $E_{\frac{\lambda-1}{\lambda}}$.
\end{prop}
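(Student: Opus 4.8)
The plan is to verify each of the five assertions by direct substitution into the definition of the $t$-quadratic twist, observing that in every case the resulting Weierstrass equation differs from that of $E_\lambda$ only by a translation of the $x$-coordinate, which is manifestly defined over $\mathbb{F}_p$. First I would record that, by definition, the $t$-quadratic twist of $E_\mu$ is the curve $E^t_\mu : y^2 = x(x-t)(x-t\mu)$, whose three roots are $0$, $t$, and $t\mu$. Since $p \geq 5$, these roots are distinct and $E^t_\mu$ is a genuine elliptic curve. Thus to prove a statement of the form ``$E_\lambda$ is the $t$-quadratic twist of $E_\mu$'' it suffices to substitute the prescribed value of $\mu$, simplify the product $t\mu$, and exhibit a translation of the root set $\{0,t,t\mu\}$ onto the root set $\{0,1,\lambda\}$ of $E_\lambda : y^2 = x(x-1)(x-\lambda)$.

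Then I would carry out the five computations. For (1), taking $t=\lambda$ and $\mu=\tfrac{1}{\lambda}$ gives $E^{\lambda}_{1/\lambda}: y^2 = x(x-\lambda)(x-1)$, which is literally $E_\lambda$ after reordering the factors. For (2)--(5) the simplification of $t\mu$ produces a root set that is a translate of $\{0,1,\lambda\}$. Explicitly: in (2) one has $t\mu = \lambda-1$, so the roots are $\{0,-1,\lambda-1\}$ and a translation by $1$ recovers $\{0,1,\lambda\}$; in (3) one computes $(1-\lambda)\cdot\tfrac{\lambda}{\lambda-1}=-\lambda$, giving roots $\{0,1-\lambda,-\lambda\}$, which a translation by $\lambda$ returns to $\{0,1,\lambda\}$; in (4) one checks $(\lambda-1)\cdot\tfrac{1}{1-\lambda}=-1$, giving roots $\{0,\lambda-1,-1\}$ and a translation by $1$; and in (5) one checks $(-\lambda)\cdot\tfrac{\lambda-1}{\lambda}=1-\lambda$, giving roots $\{0,-\lambda,1-\lambda\}$ and a translation by $\lambda$. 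In each instance substituting the corresponding shift into the equation of $E^t_\mu$ reproduces $x(x-1)(x-\lambda)$ on the nose.

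The only point requiring care---and the reason these identifications hold over $\mathbb{F}_p$ rather than merely over $\overline{\mathbb{F}}_p$---is that in every case the change of variables is a pure translation of $x$ with no rescaling, so no square roots intervene and the isomorphism is automatically $\mathbb{F}_p$-rational. Conceptually, Proposition~\ref{prop_Legendrej}(4) already guarantees that each of the six curves $E_\mu$ for $\mu$ in the orbit $\{\lambda,\tfrac{1}{\lambda},1-\lambda,\tfrac{1}{1-\lambda},\tfrac{\lambda}{\lambda-1},\tfrac{\lambda-1}{\lambda}\}$ shares the $j$-invariant of $E_\lambda$ and hence is \emph{some} quadratic twist of it; the content of the proposition is that the displayed parameter $t$ is the correct twisting factor, which is precisely what the explicit translations confirm. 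There is accordingly no substantive obstacle here beyond the bookkeeping of the five substitutions; the twist parameters were chosen exactly so that the $x$-scaling is trivial, which is what makes the calculations ``straightforward.''
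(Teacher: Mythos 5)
Your proof is correct and is precisely the ``straightforward calculations'' that the paper alludes to without writing out: in each of the five cases you substitute into the definition $E^t_\mu : y^2 = x(x-t)(x-t\mu)$, simplify $t\mu$, and exhibit an $\mathbb{F}_p$-rational translation of $x$ carrying the root set $\{0,t,t\mu\}$ onto $\{0,1,\lambda\}$, which gives the required $\mathbb{F}_p$-isomorphism with no rescaling (and hence no square roots) involved. All five substitutions check out, so there is nothing to add.
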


\noindent The curve $E_{\lambda}$ has $3$ points of order $2$ namely, $(0,0)$, $(1,0)$ and $(\lambda,0)$.
The following lemma gives us certain information on the $2$-power torsion of $E_\lambda$ which we will require in Section \ref{sec_proofs}.
Please see \cite[V.5] {Kn}, \cite[Lemma 3.2]{A2} and \cite[Prop. 3.3]{AO2} for similar arguments.

\begin{lemma}\label{lem_8tors}
Let $p\geq5$ be prime and let $E_{\lambda}$ be defined by \eqref{def_ELambda}.
\begin{enumerate}
\item $E_{\lambda}( \mathbb{F}_p)$ contains $\mathbb{Z} / 2 \mathbb{Z} \times  \mathbb{Z} /  8 \mathbb{Z}$ but not $\mathbb{Z} / 4 \mathbb{Z} \times  \mathbb{Z} /  4\mathbb{Z}$ if $-1$ is a square, $\lambda$ is a fourth power and $\lambda-1$ is not a square in $\mathbb{F}_p$.
\item Let $E / \mathbb{F}_p$ be an elliptic curve such that $E( \mathbb{F}_p)$ contains $\mathbb{Z} / 2 \mathbb{Z} \times  \mathbb{Z} /  8 \mathbb{Z}$ but not $\mathbb{Z} / 4 \mathbb{Z} \times  \mathbb{Z} /  4\mathbb{Z}$.
Then, if  $p \equiv 1 \pmod 4$, $E$ is isomorphic to $E_\lambda$ for some $\lambda \in \mathbb{F}_p \setminus \lbrace 0, 1 \rbrace$ with $\lambda$ a fourth power and $\lambda-1$ not a square in $\mathbb{F}_p$.
\end{enumerate}
\end{lemma}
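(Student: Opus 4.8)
The plan is to analyze the $2$-Sylow subgroup of $E_{\lambda}(\mathbb{F}_p)$ via Theorem~\ref{thm_2P}, which detects divisibility by $2$ of a point $P=(x_0,y_0)$ through the condition that $x_0$, $x_0-1$, $x_0-\lambda$ all be squares in $\mathbb{F}_p$. Since $E_{\lambda}$ always carries the full $2$-torsion $\{O,(0,0),(1,0),(\lambda,0)\}$, its $2$-Sylow has the shape $\mathbb{Z}/2^a\mathbb{Z}\times\mathbb{Z}/2^b\mathbb{Z}$ with $1\le a\le b$, and a short count of $[2]$-images shows that the number of nonzero $2$-torsion points lying in $[2]E_{\lambda}(\mathbb{F}_p)$ is $3$ if $a\ge 2$ and at most $1$ if $a=1$; moreover $b\ge 3$ precisely when some $2$-torsion point admits two successive halvings. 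So the whole proof reduces to (i) counting which $2$-torsion points are divisible by $2$, and (ii) deciding whether the resulting order-$4$ points can be halved again.

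For part~(1) I would first apply Theorem~\ref{thm_2P} to each $2$-torsion point under the hypotheses ($-1$ a square, $\lambda$ a fourth power, $\lambda-1$ a nonsquare): a direct check shows $(0,0)$ is divisible by $2$ (as $-1$ and $-\lambda$ are squares) while $(1,0)$ and $(\lambda,0)$ are not (as $1-\lambda$ and $\lambda-1$ are nonsquares). Exactly one divisible $2$-torsion point forces $a=1$, already excluding $\mathbb{Z}/4\mathbb{Z}\times\mathbb{Z}/4\mathbb{Z}$. To produce $\mathbb{Z}/2\mathbb{Z}\times\mathbb{Z}/8\mathbb{Z}$ I must exhibit a point of order $8$, i.e. show that one of the order-$4$ points above $(0,0)$ is again halvable. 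The key computation is the halving identity $(u-e_1)^2=(e_1-e_2)(e_1-e_3)$ for the $x$-coordinates of the preimages of a $2$-torsion point $(e_1,0)$; for $(0,0)$ this gives $u=\pm\sqrt{\lambda}$, and both values are realized by $\mathbb{F}_p$-rational order-$4$ points (e.g. by $Q$ and $Q+(1,0)$ for a halving $Q$ of $(0,0)$). Writing $\mu$ for a fourth root of $\lambda$ and testing $(\pm\mu^2,\ast)$ with Theorem~\ref{thm_2P}, divisibility collapses (using that $-1$ and $\mu^2$ are squares) to the single requirement that $1-\mu^2$, respectively $1+\mu^2$, be a square. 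Since $(1-\mu^2)(1+\mu^2)=1-\lambda$ is a nonsquare (because $-1$ is a square and $\lambda-1$ is not), exactly one of the two factors is a square, so exactly one of the two order-$4$ points can be halved. This produces a point of order $8$, giving $b\ge 3$ and finishing part~(1).

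For part~(2) I would run this in reverse. Full rational $2$-torsion lets me write $E\colon y^2=(x-e_1)(x-e_2)(x-e_3)$ with distinct $e_i\in\mathbb{F}_p$, and the hypothesis forces the $2$-Sylow to be $\mathbb{Z}/2\mathbb{Z}\times\mathbb{Z}/2^b\mathbb{Z}$ with $b\ge 3$, so exactly one $2$-torsion point, say $(e_1,0)$, is divisible by $2$. Theorem~\ref{thm_2P} then makes $e_1-e_2$ and $e_1-e_3$ squares, and—since $(e_2,0)$ is not divisible while $e_2-e_1$ is a square—makes $e_2-e_3$ a nonsquare. As $p\equiv 1\pmod 4$ gives $-1$ a square, the substitution $X=(x-e_1)/(e_2-e_1)$ is defined over $\mathbb{F}_p$ and carries $E$ to $E_{\lambda}$ with $\lambda=(e_3-e_1)/(e_2-e_1)$, a ratio of squares and hence a square, while $\lambda-1=(e_3-e_2)/(e_2-e_1)$ is a nonsquare over a square, hence a nonsquare. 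To promote ``$\lambda$ is a square'' to ``$\lambda$ is a fourth power'' I would use the surviving point of order $8$ in $E_\lambda$: its double is an order-$4$ point above $(0,0)$ with $x$-coordinate $\pm\sqrt{\lambda}$, and being divisible by $2$ it forces $\pm\sqrt{\lambda}$ to be a square by Theorem~\ref{thm_2P}; with $-1$ a square this makes $\sqrt{\lambda}$ a square, i.e. $\lambda$ a fourth power.

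The main obstacle I expect is the order-$8$ step common to both directions: the bookkeeping that isolates the unique divisible $2$-torsion point, and the halving-formula computation reducing the existence of an order-$8$ point to the quadratic character of $1\pm\mu^2$. The clean resolution rests on the factorization $(1-\mu^2)(1+\mu^2)=1-\lambda$ together with the nonsquareness of $\lambda-1$, which is exactly what guarantees that precisely one of the two candidate order-$4$ points can be halved.
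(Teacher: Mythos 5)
Your proposal is correct and takes essentially the same approach as the paper's proof: both apply Theorem~\ref{thm_2P} to the three $2$-torsion points to single out $(0,0)$ as the unique double (ruling out $\mathbb{Z}/4\mathbb{Z}\times\mathbb{Z}/4\mathbb{Z}$), then locate the rational order-$4$ points above $(0,0)$ at $x=\pm\sqrt{\lambda}$ and use the factorization $(t-1)(t+1)=\lambda-1$, a nonsquare, to show exactly one pair of them halves, yielding the order-$8$ point, with part~(2) obtained by reversing this via the same change of variables. The only cosmetic difference is that the paper writes the four order-$4$ points explicitly from the duplication formula, whereas you reach them through the halving identity $(u-e_1)^2=(e_1-e_2)(e_1-e_3)$.
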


\begin{proof}
(1) Using Theorem \ref{thm_2P} we see that $(0,0)$ is a double but $(1,0)$ and $(\lambda,0)$ are not. Therefore $E_{\lambda}( \mathbb{F}_p)$ cannot contain $\mathbb{Z} / 4 \mathbb{Z} \times  \mathbb{Z} /  4 \mathbb{Z}$.
Let $t \in \mathbb{F}_p$ be fixed such that $t^2= \lambda$ and let $i \in \mathbb{F}_p$ be fixed such that $i^2= -1$.
Using the duplication formula (\cite[\S III.2]{Si}) we see that if $-1$ and $\lambda$ are squares then $Q_1 = (t, i t (t-1))$, $Q_2 = (t, -i t (t-1))$, $Q_3= (-t,  i t (t+1))$, $Q_4= (-t, - i t (t+1))$,
are four points of order $4$ whose double is $(0,0)$. If $\lambda-1 = t^2-1$ is not a square then either $t-1$ or $t+1$ is a square but not both. Therefore, using Theorem~\ref{thm_2P} we see that either $Q_1$ and $Q_2$ are doubles or $Q_3$ and $Q_4$ are doubles, but not both pairs.
Therefore $E_{\lambda}( \mathbb{F}_p)$ contains $\mathbb{Z} / 2 \mathbb{Z} \times  \mathbb{Z} /  8 \mathbb{Z}$.

(2) As $E$ has full $2$-torsion we can assume it is of the form $y^2 = x(x-\alpha)(x-\beta)$ for some $\alpha$, $\beta \in \mathbb{F}_p$. Also, $E$ contains $\mathbb{Z} / 2 \mathbb{Z} \times  \mathbb{Z} /  4 \mathbb{Z}$ but not $\mathbb{Z} / 4 \mathbb{Z} \times  \mathbb{Z} /  4 \mathbb{Z}$ so one and only one of $(0,0)$, $(\alpha,0)$ and $(\beta,0)$ is a double.
We can assume without loss of generality that $(0,0)$ is the double point. Therefore, if  $p \equiv 1 \pmod 4$, we see from Theorem \ref{thm_2P} that $\alpha$ and $\beta$ are squares but $\alpha - \beta$ is not. Letting $m^2= \alpha$ and $n^2 = \beta$ then $E$ has the form
$y^2=x(x-m^2)(x-n^2)$ which is isomorphic to $E_{\lambda}$ with $\lambda= \frac{n^2}{m^2}$. (Make the change of variables $x \to m^2 x$ and $y \to m^3y$ .) Therefore $\lambda$ is a square but $\lambda-1$ is not.
We see from part (1) that as $-1$ and $\lambda$ are both squares there are four points of order $4$ whose double is $(0,0)$. Two of of these points have $x$ coordinate of  $t$ and the other two have $x$ coordinate of $-t$, where $t^2= \lambda$.
As $E$ contains $\mathbb{Z} / 2 \mathbb{Z} \times  \mathbb{Z} /  8 \mathbb{Z}$ exactly two of these points of order 4 are doubles. Therefore, by Theorem \ref{thm_2P}, $t$ must be a square and either $t-1$ or $t+1$ is a square but not both.
As $t^2= \lambda$ and $(t-1)(t+1)=\lambda-1$ we see that $\lambda$ is a fourth power and $\lambda-1$ is not a square.
\end{proof}

For an elliptic curve $E / \mathbb{F}_p$ we define the integer $a_p(E)$ by
$a_p(E) := p +1 -  |E(\mathbb{F}_p)|$.  A curve $E / \mathbb{F}_p$ is supersingular if and only if $p \mid a_p(E)$, which if $p\geq 5$, is equivalent to $a_p(E)=0$ by the Hasse bound.
As usual if $E$ is given by $y^2 = f(x)$ then
\begin{equation}\label{for_a}
a_p(E) = - \sum_{x \in \mathbb{F}_p} \phi_p(f(x)),
\end{equation}
where $\phi_p(\cdot)$ is the Legendre symbol modulo $p$ (and the character of order $2$ of $\mathbb{F}_p^{*}$). We will often omit the subscript $p$ when it is clear from the context.
Consequently,
\begin{equation}\label{for_aTwist}
a_p(E_{\lambda}) = \phi(t) \, a_p(E^{t}_{\lambda}).
\end{equation}
We now prove some identities for $a_p(\lambda):=a_p(E_{\lambda})$.

\begin{lemma}\label{lem_ap3}
For $p$ an odd prime,
$\displaystyle \sum_{\substack{\lambda =2\\ \phi(\lambda)=1}}^{p-1} a_p(\lambda)
= -(1 + \phi(-1)).$
\end{lemma}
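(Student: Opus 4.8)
The plan is to expand $a_p(\lambda)$ via \eqref{for_a} and reduce everything to elementary quadratic character sums over $\mathbb{F}_p$. Since the summation index runs over $\lambda \in \{2,\dots,p-1\}$ with $\phi(\lambda)=1$, it ranges precisely over the nonzero squares other than $\lambda=1$. First I would detect this condition using the indicator $\tfrac12(1+\phi(\lambda))$, which is valid because $\phi(0)=0$ removes $\lambda=0$ automatically. This rewrites the left-hand side as
\[
\sum_{\substack{\lambda=2\\\phi(\lambda)=1}}^{p-1} a_p(\lambda)
=\frac12\sum_{\lambda\in\mathbb{F}_p\setminus\{0,1\}} a_p(\lambda)
+\frac12\sum_{\lambda\in\mathbb{F}_p\setminus\{0,1\}}\phi(\lambda)\,a_p(\lambda),
\]
and I would show that each of the two sums on the right equals $-(1+\phi(-1))$, whence the average does too.

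For the first sum, I would substitute $a_p(\lambda)=-\sum_{x}\phi(x(x-1)(x-\lambda))$ and interchange the order of summation to obtain $-\sum_x \phi(x(x-1))\sum_{\lambda\neq0,1}\phi(x-\lambda)$. The inner sum is elementary: as $\sum_{u\in\mathbb{F}_p}\phi(u)=0$, one finds $\sum_{\lambda\neq0,1}\phi(x-\lambda)=-\phi(x)-\phi(x-1)$. Substituting back and using $\phi(x^2)=1$ for $x\neq0$ collapses the outer sum to
\[
\sum_{x\neq0}\phi(x-1)+\sum_{x\neq1}\phi(x)=-\phi(-1)-1,
\]
so the first sum equals $-(1+\phi(-1))$.

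For the second sum, the same interchange produces $-\sum_x\phi(x(x-1))\sum_{\lambda\neq0,1}\phi(\lambda(x-\lambda))$. Here the key tool is the standard evaluation of a quadratic character sum: for odd $p$ and $a\neq0$, $\sum_{\lambda}\phi(a\lambda^2+b\lambda+c)$ equals $-\phi(a)$ when $b^2-4ac\neq0$ and $(p-1)\phi(a)$ otherwise. Applying this to $\phi(\lambda(x-\lambda))=\phi(-\lambda^2+x\lambda)$, whose discriminant is $x^2$, and then removing the $\lambda=1$ term, yields $-\phi(-1)-\phi(x-1)$ for each $x\neq0$; the contributions from $x=0,1$ vanish since $\phi(x(x-1))=0$ there. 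The remaining outer sum again reduces, via the same quadratic evaluation together with $\phi(x^2)=1$, to $\phi(-1)\sum_x\phi(x^2-x)+\sum_{x\neq0,1}\phi(x)=-\phi(-1)-1$. Thus the second sum is also $-(1+\phi(-1))$, and averaging gives the claim.

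The only real obstacle is the bookkeeping: one must track the excluded indices $\lambda\in\{0,1\}$ and $x\in\{0,1\}$ carefully and invoke the quadratic-sum formula in the correct discriminant case each time. Everything else is routine manipulation of the Legendre symbol $\phi$, and no deeper input (such as the twist relations of Proposition~\ref{prop_Etwist}) is required.
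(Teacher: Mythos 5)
Your proposal is correct, and every step checks out: the indicator decomposition, both inner-sum evaluations, and the final bookkeeping all hold. Your route differs from the paper's only in organization, though the difference is worth noting. The paper parametrizes the squares directly, writing the left-hand side as $\tfrac{1}{2}\sum_{t=2}^{p-2} a_p(t^2)$, interchanges summation so that the inner sum is the Jacobsthal-type sum $\sum_{t}\phi(t^2-x)$ over the square-root variable $t$, and quotes the Jacobsthal identities $\sum_{t=1}^{p-1}\phi(t^2+a)=-(1+\phi(a))$ and $\sum_{x=1}^{p-1}\phi(x^2-x)=-1$ to finish. You instead detect squares with the indicator $\tfrac{1}{2}(1+\phi(\lambda))$, which splits the sum into the complete sum $S_1=\sum_{\lambda\neq 0,1}a_p(\lambda)$ and the twisted sum $S_2=\sum_{\lambda\neq 0,1}\phi(\lambda)a_p(\lambda)$, and you evaluate each with the complete quadratic character sum formula $\sum_{\lambda}\phi(a\lambda^2+b\lambda+c)=-\phi(a)$ (nonzero discriminant). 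These are two faces of the same computation: since $1+\phi(\lambda)$ counts square roots of $\lambda$, your split is exactly the paper's parametrization, and the Jacobsthal identities the paper cites are the complete quadratic sum formula in disguise. What your version buys is a slightly sharper conclusion, namely that $S_1$ and $S_2$ each individually equal $-(1+\phi(-1))$, not merely their average; what the paper's version buys is brevity, since bundling the two pieces into one sum over $t$ lets it dispatch the whole calculation in a few lines. Neither argument needs the twist relations of Proposition~\ref{prop_Etwist}, as you correctly observe.
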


\begin{proof}
Using \eqref{for_a} we see that
\[
\sum_{\substack{\lambda =2\\ \phi(\lambda)=1}}^{p-1} a_p(\lambda)
=  \frac{1}{2}\sum_{t =2}^{p-2} a_p(t^2)
= - \frac{\phi(-1)}{2} \sum_{x\in \mathbb{F}_p \setminus \{0,1\}} \phi(x) \phi(x-1)  \sum_{t =2}^{p-2} \phi(t^2-x).
\]
Using properties of Jacobsthal sums (see \cite[\S 6.1]{BEW}) we note that, for $a \in \mathbb{F}_p \setminus \{0\}$, we have
$\sum_{t =1}^{p-1} \phi(t^2+a) = -(1+\phi(a))$ and $\sum_{x=1}^{p-1} \phi(x^2-x) = -1$.
Therefore, we have
$\sum_{t =2}^{p-2} \phi(t^2-x) = -(1+\phi(-x)+2 \phi(1-x))$, and
\begin{align*}
\sum_{\substack{\lambda =2\\ \phi(\lambda)=1}}^{p-1} a_p(\lambda)
&= \frac{\phi(-1)}{2} \sum_{x\in \mathbb{F}_p \setminus \{0,1\}} \phi(x^2-x)
+ \frac{1}{2} \sum_{x\in \mathbb{F}_p \setminus \{0,1\}} \phi(x-1)
+ \sum_{x\in \mathbb{F}_p \setminus \{0,1\}} \phi(x).
\end{align*}
We then apply the second Jacobsthal identity, noting that $\sum_{x \in \mathbb{F}_p} \phi(x) = 0$.
\end{proof}

\begin{lemma}\label{lem_ap4}
If $p \equiv 1 \pmod 4$, let $\chi_4 \in \widehat{\mathbb{F}_p^*}$ denote a character of order $4$.  Then
\begin{enumerate}
\item $\displaystyle \sum_{\substack{\lambda =2\\ \phi(\lambda(\lambda-1))=-1}}^{p-1} a_p(\lambda) \chi_4(\lambda(\lambda-1)) \phi(\lambda-1) = 0;$
\item $\displaystyle \sum_{\substack{\lambda =2\\ \phi(\lambda(\lambda-1))=1}}^{p-1} a_p(\lambda) \chi_4(\lambda(\lambda-1)) \phi(\lambda-1)
=  \sum_{\substack{\lambda =2\\ \phi(\lambda)=1}}^{p-1} a_p(\lambda) \chi_4(\lambda) \phi(\lambda-1).$
\end{enumerate}
\end{lemma}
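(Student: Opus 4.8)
The plan is to prove both identities by reindexing the sums under the rational maps that permute the Legendre parameter, and tracking how $a_p(\lambda)$, $\chi_4$, and $\phi$ transform. The essential input is Proposition~\ref{prop_Etwist} combined with \eqref{for_aTwist}, which records $E_\lambda$ as an explicit quadratic twist of each of its Legendre conjugates; this yields the transformation rules $a_p(1-\lambda) = \phi(-1)\,a_p(\lambda)$ and $a_p\bigl(\tfrac{\lambda}{\lambda-1}\bigr) = \phi(1-\lambda)\,a_p(\lambda)$. Throughout I will use that $p \equiv 1 \pmod 4$, so $\phi(-1)=1$, along with the elementary facts $\chi_4^2 = \phi$ and $\phi(x)^{-1}=\phi(x)=\phi(x)^3$ for $x \neq 0$.

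For part~(1) I would apply the involution $\lambda \mapsto 1-\lambda$ of $\mathbb{F}_p \setminus \{0,1\}$. It preserves the summation condition, since $(1-\lambda)\bigl((1-\lambda)-1\bigr) = \lambda(\lambda-1)$, and it has no fixed point on the relevant set: the only fixed point is $\lambda = \tfrac12$, for which $\phi(\lambda(\lambda-1)) = \phi(-\tfrac14) = \phi(-1) = 1 \neq -1$. Under the substitution, using $a_p(1-\lambda)=a_p(\lambda)$ and $\phi(-\lambda)=\phi(\lambda)$, the factor $a_p(\lambda)\chi_4(\lambda(\lambda-1))$ is unchanged while $\phi(\lambda-1)$ becomes $\phi(\lambda)$. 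Pairing $\lambda$ with $1-\lambda$ therefore produces the common factor $\phi(\lambda-1)+\phi(\lambda)$, which vanishes on the summation set because $\phi(\lambda)\phi(\lambda-1)=\phi(\lambda(\lambda-1))=-1$ forces the two signs to be opposite. Hence the terms cancel in pairs and the sum is $0$.

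For part~(2) I would reindex the left-hand sum by the involution $\lambda \mapsto \tfrac{\lambda}{\lambda-1}$ of $\mathbb{F}_p \setminus \{0,1\}$. Since $\tfrac{\lambda}{\lambda-1}\bigl(\tfrac{\lambda}{\lambda-1}-1\bigr) = \lambda/(\lambda-1)^2$ and $\phi$ of a square is $1$, one has $\phi\bigl(\tfrac{\lambda}{\lambda-1}(\tfrac{\lambda}{\lambda-1}-1)\bigr) = \phi(\lambda)$, so this map carries $\{\phi(\lambda)=1\}$ bijectively onto $\{\phi(\lambda(\lambda-1))=1\}$; reindexing thus matches the two domains. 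It then remains to evaluate the summand $a_p(\mu)\chi_4(\mu(\mu-1))\phi(\mu-1)$ at $\mu = \tfrac{\lambda}{\lambda-1}$: using $a_p\bigl(\tfrac{\lambda}{\lambda-1}\bigr) = \phi(1-\lambda)a_p(\lambda) = \phi(\lambda-1)a_p(\lambda)$, the identity $\chi_4\bigl(\lambda/(\lambda-1)^2\bigr) = \chi_4(\lambda)\phi(\lambda-1)$ (via $\chi_4\bigl((\lambda-1)^2\bigr)=\phi(\lambda-1)$), and $\phi\bigl(1/(\lambda-1)\bigr)=\phi(\lambda-1)$, the product collapses, after absorbing the odd powers of $\phi(\lambda-1)$, to $a_p(\lambda)\chi_4(\lambda)\phi(\lambda-1)$. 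This is precisely the right-hand summand, giving the identity.

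The main thing to get right—routine but error-prone—is the bookkeeping of the $\chi_4$ and $\phi$ factors under the substitutions: one repeatedly uses $\chi_4^2=\phi$ to turn $\chi_4$ of a square into $\phi$, and $\phi(x)=\phi(x)^{-1}=\phi(x)^3$ to clear the resulting sign factors. One must also verify at each step that the chosen map is a genuine bijection of the indicated summation domain, with no fixed points (part~(1)) and no boundary values in $\{0,1\}$ entering, so that no stray terms are lost or created.
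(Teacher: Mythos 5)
Your proposal is correct and is essentially the paper's own argument: the paper's proof of this lemma simply states that (1) and (2) follow from routine applications of Proposition~\ref{prop_Etwist} and \eqref{for_aTwist}, using that $-1$ is a square when $p \equiv 1 \pmod{4}$, and omits all details. The involutions $\lambda \mapsto 1-\lambda$ and $\lambda \mapsto \lambda/(\lambda-1)$ you employ, together with your bookkeeping of the $a_p$, $\chi_4$, and $\phi$ factors (including the fixed-point and domain checks), are precisely those omitted details, and they all check out.
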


\begin{proof}
The proofs of (1) and (2) are relatively routine applications of Proposition~\ref{prop_Etwist} together with \eqref{for_aTwist}, using the fact that $-1$ is a square in $\mathbb{F}_p$ if $p \equiv 1 \pmod{4}$.
We omit the details for reasons of brevity.
\end{proof}

\begin{lemma}\label{lem_ap6}
If $p \equiv 1 \pmod 4$, let $\chi_4 \in \widehat{\mathbb{F}_p^*}$ denote a character of order $4$.  Then
\begin{equation*}
\sum_{\substack{\lambda =2\\ \chi_4(\lambda) = 1 \\ \phi(\lambda-1)=-1}}^{p-1} a_p(\lambda)
=  \sum_{\substack{\lambda =2\\ \phi(\lambda)=1, \chi_4(\lambda)=-1 \\ \phi(\lambda-1) = 1}}^{p-1} a_p(\lambda) .
\end{equation*}
\end{lemma}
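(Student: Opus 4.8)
The plan is to convert the statement---whose two sides are sums of $a_p(\lambda)$ over $\lambda$ cut out by conditions on $\chi_4(\lambda)$ and $\phi(\lambda-1)$---into a single identity between multiplicative-character-weighted sums, and then to evaluate those sums with Jacobi sums. First I would replace the conditions by their character expansions: for $\lambda\neq 0$ one has $\mathbf{1}[\chi_4(\lambda)=\pm1]=\tfrac14\sum_{j=0}^{3}(\pm1)^j\chi_4^{\,j}(\lambda)$, and for $\lambda\neq1$ one has $\mathbf{1}[\phi(\lambda-1)=\pm1]=\tfrac12\bigl(1\pm\phi(\lambda-1)\bigr)$. Substituting these into the two sides and subtracting, all cross terms cancel and (using $\phi(-1)=1$ for $p\equiv1\pmod4$) the asserted identity collapses to
\begin{equation*}
\sum_{\lambda\neq0,1} a_p(\lambda)\bigl(\chi_4(\lambda)+\overline{\chi_4}(\lambda)\bigr)=\sum_{\lambda\neq0,1} a_p(\lambda)\bigl(1+\phi(\lambda)\bigr)\phi(\lambda-1). \tag{$\ast$}
\end{equation*}
Since $\chi_4(\lambda)+\overline{\chi_4}(\lambda)=0$ and $1+\phi(\lambda)=0$ whenever $\lambda$ is a nonsquare, both sides of $(\ast)$ are supported on the squares; writing $\lambda=s^2$, so that $\chi_4(s^2)=\phi(s)$ and $\phi(s^2-1)=\phi(s-1)\phi(s+1)$, reduces $(\ast)$ to
\begin{equation*}
\sum_{s\neq0,\pm1} a_p(s^2)\,\phi(s)=\sum_{s\neq0,\pm1} a_p(s^2)\,\phi(s-1)\phi(s+1). \tag{$\ast\ast$}
\end{equation*}

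It is worth noting why a direct bijective argument in the spirit of Lemma~\ref{lem_ap4} is unavailable here. By Lemma~\ref{lem_8tors} the $\lambda$ on the left-hand side of the statement parametrize curves $E_\lambda$ containing $\mathbb{Z}/2\mathbb{Z}\times\mathbb{Z}/8\mathbb{Z}$ but not $\mathbb{Z}/4\mathbb{Z}\times\mathbb{Z}/4\mathbb{Z}$, whereas on the right-hand side both $\lambda$ and $\lambda-1$ are squares, so by Theorem~\ref{thm_2P} all three $2$-torsion points of $E_\lambda$ are doubles and $E_\lambda$ contains $\mathbb{Z}/4\mathbb{Z}\times\mathbb{Z}/4\mathbb{Z}$. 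The two families have distinct torsion, hence lie in distinct orbits under the twist maps of Proposition~\ref{prop_Etwist}; the assertion is genuinely an identity of sums rather than a term-by-term matching, and this is what forces the character-sum route.

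To prove $(\ast\ast)$ I would expand $a_p(s^2)=-\sum_x \phi(x)\phi(x-1)\phi(x-s^2)$ via \eqref{for_a}, interchange the order of summation, and evaluate the inner sums as Jacobi sums. Writing $J(\alpha,\beta)=\sum_{u}\alpha(u)\beta(1-u)$, the substitution $x=s^2w$ recasts the left-hand side of $(\ast\ast)$ so that its inner $s$-sum is $\sum_{s}\phi(s)\phi(ws^2-1)$, which (using $\phi\chi_4=\overline{\chi_4}$) equals $\overline{\chi_4}(w)\,J(\chi_4,\phi)+\chi_4(w)\,J(\overline{\chi_4},\phi)$; carrying out the remaining $w$-sum then yields a symmetric combination of $J(\chi_4,\phi)$ and $J(\overline{\chi_4},\phi)$. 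The same procedure applied to the right-hand side produces a combination of the same two Jacobi sums, and the two are reconciled using standard relations among Jacobi sums together with $J(\overline{\chi_4},\phi)=\overline{J(\chi_4,\phi)}$ (as $\phi$ is real). This is the same circle of ideas behind Lemmas~\ref{lem_ap3} and~\ref{lem_ap4}.

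The main obstacle is the bookkeeping in this final evaluation. One must track the boundary contributions that arise when the sums are extended to all $s$ and $x$: the degenerate values $\lambda=0,1$ and $s=0,\pm1$ each contribute $O(1)$ correction terms that have to be matched on the two sides. One must also handle the dependence on $p\bmod 8$, since $\chi_4(-1)=(-1)^{(p-1)/4}$ enters the manipulations and differs for $p\equiv1$ and $p\equiv5\pmod 8$, so the agreement of the two sides must be verified in both cases. Once these constant and sign contributions are reconciled, $(\ast\ast)$, and hence the lemma, follows.
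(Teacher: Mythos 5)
Your reductions are fine: the character-expansion step to $(\ast)$ and the substitution $\lambda=s^2$ giving $(\ast\ast)$ are both algebraically correct. The gap is in the final step, and it is fatal as stated. The ``same procedure'' does \emph{not} work on the right-hand side of $(\ast\ast)$. On the left, after $x=s^2w$ the inner $s$-sum is $\sum_s \phi(s)\phi(ws^2-1)$, and since $\phi(s)=\chi_4(s^2)$ this does collapse to $\bar{\chi_4}(w)J(\chi_4,\phi)+\chi_4(w)J(\bar{\chi_4},\phi)$ as you say. But on the right the same substitution produces the inner sum $\sum_s \phi\bigl((s^2-1)(ws^2-1)\bigr)$, the quadratic character of a \emph{quartic} in $s$. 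Writing $u=s^2$ splits it as $\sum_u \phi\bigl((u-1)(wu-1)\bigr)+\sum_u \phi\bigl(u(u-1)(wu-1)\bigr) = -\phi(w)-\phi(w)\,a_p(1/w)$ (for $w\neq 0,1$), i.e.\ it contains another elliptic-curve sum rather than a Jacobi sum. Carrying out the outer $w$-sum then leaves you with
\begin{equation*}
\sum_{v\neq 0,1}\phi\bigl(v(v-1)\bigr)\,a_p(v)
\end{equation*}
up to bounded correction terms. This is a genuine double character sum (the trace of Frobenius of a K3-type surface, essentially the value ${_3F_2}(\phi,\phi,\phi;\varepsilon,\varepsilon\,|\,1)_p$ of Theorem~\ref{thm_3F2_16}); its evaluation in terms of $a^2-b^2$ with $p=a^2+b^2$ is a theorem at the same depth as the results this lemma is being used to prove, not a consequence of ``standard relations among Jacobi sums together with $J(\bar{\chi_4},\phi)=\overline{J(\chi_4,\phi)}$.'' So your plan either needs to import such an evaluation (circular in spirit here) or supply a substantial additional argument.

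Your structural claim that a term-by-term matching is ``genuinely unavailable'' is also mistaken, and it is the instructive error: distinct $2$-power torsion only rules out \emph{isomorphisms} (twists, Proposition~\ref{prop_Etwist}), but $a_p$ is preserved by any $\mathbb{F}_p$-rational \emph{isogeny}. The paper's proof matches terms exactly this way: $E_\lambda$ is $2$-isogenous to $W_\lambda: y^2=x^3+2(1+\lambda)x^2+(1-\lambda)^2x$, which for $\phi(\lambda)=1$, $t^2=\lambda$, is isomorphic to $E_\psi$ with $\psi=\bigl(\tfrac{1-t}{1+t}\bigr)^2$; one checks that $\chi_4(\lambda)=1$, $\phi(\lambda-1)=-1$ forces $\phi(\psi)=1$, $\chi_4(\psi)=-1$, $\phi(\psi-1)=1$, and then $\lambda\mapsto\psi$ induces a bijection on classes modulo $\lambda\sim 1/\lambda$, with $a_p(\lambda)=a_p(\psi)$. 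The isogeny changes the torsion structure while preserving the point count, which is all the lemma needs; this bypasses the hard character-sum evaluation entirely.
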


\begin{proof}
The curve $E_\lambda$ is 2-isogenous to the elliptic curve
$W_{\lambda}: y^2=x^3 + 2(1+\lambda)x^2 +(1-\lambda)^2 x$, $\lambda \in \mathbb{F}_p \setminus \lbrace 0, 1 \rbrace$,
via the maps $f_\lambda : W_{\lambda} \to E_\lambda$ and its dual $\tilde{f}_\lambda:  E_\lambda \to W_{\lambda}$ given by
\[
f_\lambda(x,y) = \left(\tfrac{y^2}{4x^2}, \tfrac{y((1-\lambda)^2-x^2)}{8x^2} \right),
\quad
\tilde{f}_\lambda(x,y) = \left(\tfrac{y^2}{x^2}, \tfrac{y(\lambda-x^2)}{x^2} \right),
\]
(see \cite[Ex.~III.4.5]{Si}).  In turn, when $\phi(\lambda)=1$ and $t \in \mathbb{F}_p$ is fixed such that $t^2= \lambda$, $W_{\lambda}$ is isomorphic over $\mathbb{F}_p$ to $E_{\psi}$, where $\psi= \bigl(\tfrac{1-t}{1+t}\bigr)^2$.
This follows from the change of variables $(x,y) \to (u^2 x, u^3 y)$ where $u^2 = -(1+t)^2.$
Noting that isogenous curves over $\mathbb{F}_p$ have the same number of $\mathbb{F}_p$-rational points,  we see that $a_p(\lambda) =  a_p(\psi)$.
Also, as $p \equiv 1 \pmod 4$, it is easy to check that  if $\chi_4(\lambda)=1$ and $\phi(\lambda-1)=-1$ then $\phi(\psi)=1$, $\chi_4(\psi)=-1$ and $\phi(\psi-1)=1$.
We now consider the sets
\begin{align*}
S_\lambda &= \left\{2 \leq \lambda \leq p-1 \mid \chi_4(\lambda)=1, \phi(\lambda-1)=-1 \right\},\\
S_\psi &= \left\{2 \leq \psi \leq p-1 \mid \phi(\psi)=1, \chi_4(\psi)=-1, \phi(\psi-1)=1 \right\}.
\end{align*}
If $\lambda \in S_\lambda$ then $\frac{1}{\lambda} \in S_\lambda$ also, and, as  $p \equiv 1 \pmod 4$,  $\lambda$ and $\frac{1}{\lambda}$ are distinct.
Similarly if $\psi \in S_\psi$ then $\frac{1}{\psi} \in S_\psi$ with $\psi$ and $\frac{1}{\psi}$ distinct.
We define the equivalence relation $\sim$ on $\mathbb{F}_p\setminus \{0, 1\}$ by
$$x \sim y \iff y \equiv x \textup{ or } \tfrac{1}{x} \pmod p.$$
We can then consider the bijective map $g: S_\lambda /{\sim} \to  S_\psi /{\sim}$ given by
$g([\lambda]) = \left[ \bigl(\tfrac{1-t}{1+t}\bigr)^2 \right]$,
with inverse map given by
$g^{-1}([\psi]) = \left[ \bigl(\tfrac{1-s}{1+s}\bigr)^2 \right]$,
where $s \in \mathbb{F}_p$ is fixed such that $s^2= \psi$. It is easy to check that these maps are well-defined and independent of choice of square roots, $t$ and~$s$.
There are exactly two elements in each equivalence class of $S_\lambda/{\sim}$, namely $\lambda$ and $\frac{1}{\lambda}$.
Also by Proposition \ref{prop_Etwist}(1), (\ref{for_aTwist}) and the fact that $\phi(\lambda)=1$  we see that  $a_p(\lambda)=a_p\bigl(\tfrac{1}{\lambda}\bigr)$.
Similarly there are exactly two elements in each equivalence class of $S_\psi /{\sim}$ with $a_p(\cdot)$ the same for both. Thus
\[
\sum_{\substack{\lambda =2\\ \chi_4(\lambda) = 1 \\ \phi(\lambda-1)=-1}}^{p-1} a_p(\lambda)
= 2 \sum_{[\lambda] \in S_\lambda /{\sim}} a_p(\lambda)
= 2 \sum_{[\psi] \in S_\psi /{\sim}} a_p(\psi)
= \sum_{\substack{\psi =2\\ \phi(\psi)=1, \chi_4(\psi)=-1 \\ \phi(\psi-1) = 1}}^{p-1} a_p(\psi).
\]
\end{proof}


\section{Proofs}\label{sec_proofs}

\begin{lemma} \label{lem_Fourier}
The space $S^{\new}_3(\Gamma_0(32), (\tfrac{-4}{\cdot}))$ is $2$-dimensional over $\mathbb{C}$ with basis $\{ f_2, \tilde{f}_2\}$, where $f_2 = \sum_{n=1}^\infty b(n)q^n = q + 4iq^3 +2 q^5 -8iq^7 + \cdots$, $i = \sqrt{-1}$, and $\tilde{f}_2$ is obtained by applying complex conjugation to the Fourier coefficients of $f_2$.
Furthermore, for an odd prime $p$, $b(p)$ is real if $p \equiv 1 \pmod{4}$ and purely imaginary if $p \equiv 3 \pmod{4}$.
\end{lemma}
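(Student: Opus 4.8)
The plan is to treat the two assertions in turn: first determine $\dim S^{\new}_3(\Gamma_0(32), (\tfrac{-4}{\cdot}))$ and exhibit the basis, and then read off the reality properties of $b(p)$ from the action of complex conjugation on Hecke eigenvalues.

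For the dimension, I would begin from a standard dimension formula for cusp forms with character (such as the Cohen--Oesterl\'e formula) to compute $\dim S_3(\Gamma_0(32), (\tfrac{-4}{\cdot}))$. To isolate the new part I need to control the oldforms. Since the form $h$ of Theorem~\ref{thm_3F2_16} is a newform of exact level $16$ and $\dim S_3(\Gamma_0(16), (\tfrac{-4}{\cdot})) = 1$, the whole of $S_3(\Gamma_0(16), (\tfrac{-4}{\cdot}))$ is new; in particular there are no newforms of level $4$ or $8$ carrying this character, so no oldforms from those levels can intrude at level $32$. Hence the oldform subspace of $S_3(\Gamma_0(32), (\tfrac{-4}{\cdot}))$ is spanned precisely by $h(z)$ and $h(2z)$ and has dimension $2$. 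Subtracting this from the dimension of the full space yields $\dim S^{\new}_3(\Gamma_0(32), (\tfrac{-4}{\cdot})) = 2$.

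Next I would verify that $f_2$, as written, is a normalized Hecke eigenform lying in this new subspace; since two cusp forms of the given weight and level coincide once their Fourier coefficients agree up to the Sturm bound, this is a finite computation. Because the character $(\tfrac{-4}{\cdot})$ is real, applying complex conjugation to Fourier coefficients maps $S^{\new}_3(\Gamma_0(32), (\tfrac{-4}{\cdot}))$ to itself and sends newforms to newforms, so $\tilde f_2$ is again a newform of this space. The coefficient $b(3) = 4i$ is not real, so $f_2 \neq \tilde f_2$; two distinct normalized eigenforms inside a $2$-dimensional space are linearly independent, and therefore $\{f_2, \tilde f_2\}$ is a basis. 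Equivalently, the two forms constitute the single $\mathrm{Gal}(\mathbb{Q}(i)/\mathbb{Q})$-orbit of newforms filling out the new subspace.

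Finally, the reality statement follows from the standard fact that, for $p \nmid N$, the Hecke operator $\rT(p)$ on $S_k(\Gamma_0(N),\chi)$ has Petersson adjoint $\bar{\chi}(p)\,\rT(p)$. Evaluating $\langle \rT(p) f_2, f_2\rangle$ in two ways for the eigenform $f_2$ gives $\overline{b(p)} = \bar{\chi}(p)\,b(p)$ where $\chi = (\tfrac{-4}{\cdot})$; as this character is real we have $\bar{\chi} = \chi$, which equals $1$ at primes $p \equiv 1 \pmod 4$ and $-1$ at primes $p \equiv 3 \pmod 4$. Thus $\overline{b(p)} = b(p)$ (so $b(p)$ is real) in the former case and $\overline{b(p)} = -b(p)$ (so $b(p)$ is purely imaginary) in the latter. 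I expect the only step requiring genuine care to be the dimension bookkeeping of the second paragraph — confirming the total dimension of $S_3(\Gamma_0(32),(\tfrac{-4}{\cdot}))$ and ruling out any level-$4$ or level-$8$ contributions — while the identification of $f_2$ by a finite coefficient comparison and the Petersson adjoint argument are both routine.
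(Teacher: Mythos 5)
Your proposal is correct, and it diverges from the paper's own proof in an instructive way. For the dimension and basis, the paper simply computes everything (dimension, Fourier coefficients, newness) with Sage; your route via a dimension formula plus oldform bookkeeping (using that $S_3(\Gamma_0(16),(\tfrac{-4}{\cdot}))$ is $1$-dimensional and new, so levels $4$ and $8$ contribute nothing and the old subspace at level $32$ is exactly $\mathrm{span}\{h(z),h(2z)\}$) is sound and makes the computational input more transparent, though note you never actually record the needed value $\dim S_3(\Gamma_0(32),(\tfrac{-4}{\cdot}))=4$, so that evaluation still has to be carried out. The real difference is in the ``furthermore'' part: the paper proves the reality/imaginarity of $b(p)$ by twisting, i.e.\ it forms $f_{2,\psi}=\sum \psi(n)b(n)q^n$ with $\psi=(\tfrac{-4}{\cdot})$, uses Koblitz's Prop.~17(b) (sharpened to show the twist stays at level $32$ rather than $512$), checks via Sage that $f_{2,\psi}$ and $\tilde f_{2,\psi}$ are new at level $32$, and concludes $f_{2,\psi}=\tilde f_2$, whence $\overline{b(n)}=\psi(n)b(n)$ for all $n$. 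Your argument instead uses the standard Petersson adjoint relation $\rT(p)^{*}=\bar{\chi}(p)\rT(p)$ for $p\nmid 32$, giving $\overline{b(p)}=\chi(p)b(p)$ directly; this is correct, entirely routine, and avoids both the level-of-twist subtlety and the extra Sage verification that the twists are new, at the cost of yielding the relation only at Hecke eigenvalues (which is all the lemma needs) rather than the cleaner structural statement $f_{2,\psi}=\tilde f_2$ that the paper obtains. Either approach proves the lemma; yours is arguably the more elementary and self-contained of the two.
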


\begin{proof}
Using Sage Mathematics Software~\cite{Sage}, one computes easily the dimension of the space as well as the first several Fourier coefficients of $f_2$ and $\tilde{f}_2$.
To see the second part of the lemma, we consider the twist $f_{2,\psi} = \sum_{n=1}^\infty \psi(n)b(n)q^n$ of $f_2$ by the character $\psi(\cdot) = (\tfrac{-4}{\cdot})$, which by \cite[Lem.~3.6]{Sh73} is a form in $S_3(\Gamma_0(32),\psi)$.  Likewise, the twist $\tilde{f}_{2,\psi}$ of $\tilde{f}_2$ is also in $S_3(\Gamma_0(32),\psi)$.  One checks via Sage that $f_{2,\psi}$ and $\tilde{f}_{2,\psi}$ are both new at level $32$.  Therefore, we can express the twists in terms of our basis for $S_3^{\new}(\Gamma_0(32),\psi)$, and it must be that
\[
  f_{2,\psi} = \tilde{f}_2, \quad \tilde{f}_{2,\psi} = f_2.
\]
We then have
$f_2 + \tilde{f}_2 = \sum_{n=1}^\infty 2\,\textup{Re}(b(n)) q^n = \sum_{n \equiv 1 \jmod{4}} 2b(n)q^n$, providing the result.
\end{proof}

\begin{proof}[Proof of Theorem \ref{thm_3F2_32}]
Assume that $p \equiv 1 \pmod{4}$ throughout, and choose a character $\chi_4 \in \widehat{\mathbb{F}_p^*}$ of order $4$.  By Lemma~\ref{lem_Fourier},
\[
b(p)=\tfrac{1}{2} \textup{Tr}^{\new}_3(\Gamma_0(32), (\tfrac{-4}{\cdot}),p),
\]
and so by Corollary \ref{cor_Trace32New}, it suffices to prove
\begin{equation}\label{for_rtp32}
{_{3}F_2} \biggl( \begin{array}{ccc} \chi_4, & \phi, & \phi \vspace{.02in}\\
\phantom{\chi_4} & \varepsilon, & \varepsilon \end{array}
\Big| \; 1 \biggr)_{p}
=
2-\displaystyle\sum_{\substack{0< |s| <2 \sqrt{p} \\ s  \equiv p+1 \imod {16}}} s \; \displaystyle\sum_{\substack{f \mid t\\ s^2-4p=t^2D}} h^*\left(\tfrac{s^2-4p}{f^2}\right) c_3(s,f)/2,
\end{equation}
where $D$ is a fundamental discriminant and $c_3(s,f)$ is as described in the Corollary \ref{cor_Trace32New}.
By Greene~\cite[Thm.~3.13]{G} (with \cite[Prop.~2.5]{McC6}) we have
\begin{equation*}
{_{3}F_2} \biggl( \begin{array}{ccc} \chi_4, & \phi, & \phi \vspace{.02in}\\
\phantom{\chi_4} & \varepsilon, & \varepsilon \end{array}
\Big| \; 1 \biggr)_{p}
=
- \chi_4(-1)
\sum_{\lambda=2}^{p-1}
{_{2}F_1} \biggl( \begin{array}{cc}\phi, & \phi \vspace{.02in}\\
\phantom{\phi} & \varepsilon \end{array}
\Big| \; \lambda \biggr)_{p} \;
\chi_4(\lambda) \bar{\chi_4}(1-\lambda).
\end{equation*}
Using a result of Koike \cite[Sec.~4]{Kk} (again with \cite[Prop.~2.5]{McC6}), we express the ${_{2}F_1}$ in the sum above in terms of the quantity $a_p(\lambda)=a_p(E_{\lambda})$, which we defined in Section \ref {sec_ec}.
This yields
\begin{equation}\label{for_3F2_to_ap32}
{_{3}F_2} \biggl( \begin{array}{ccc} \chi_4, & \phi, & \phi \vspace{.02in}\\
\phantom{\phi} & \varepsilon, & \varepsilon \end{array}
\Big| \; 1 \biggr)_{p}
= - \sum_{\lambda=2}^{p-1}
a_p(\lambda) \,
\chi_4(\lambda(\lambda-1)) \, \phi(\lambda-1).
\end{equation}
Using Lemmas \ref{lem_ap3} to \ref{lem_ap6} we see that
\begin{align}\label{for_apSplit32}
\notag - \sum_{\lambda=2}^{p-1}  a_p(\lambda) \, \chi_4(\lambda(\lambda-1)) \, \phi(\lambda-1)
&= -\sum_{\substack{\lambda =2\\ \phi(\lambda)=1}}^{p-1} a_p(\lambda) \chi_4(\lambda) \phi(\lambda-1)\\
\notag &= - \sum_{\substack{\lambda =2\\ \phi(\lambda)=1}}^{p-1} a_p(\lambda)
+\sum_{\substack{\lambda =2\\ \phi(\lambda)=1}}^{p-1} a_p(\lambda) [1-\chi_4(\lambda) \phi(\lambda-1)]\\
\notag &= 2+ 2 \sum_{\substack{\lambda =2\\ \phi(\lambda)=1\\ \chi_4(\lambda) \phi(\lambda-1)=-1}}^{p-1} a_p(\lambda) \\
&=2+4 \sum_{\substack{\lambda =2\\ \chi_4(\lambda)=1\\ \phi(\lambda-1)=-1}}^{p-1} a_p(\lambda).
\end{align}
We now use Theorem \ref{thm_Schoof} to show that
\begin{equation}\label{for_ap_to_HStar32}
\sum_{\substack{\lambda =2\\ \chi_4(\lambda)=1\\ \phi(\lambda-1)=-1}}^{p-1} a_p(\lambda)
=2 \sum_{\substack{0 < |s| < 2\sqrt{p}\\s \equiv p+1 \imod{16}}} s \,
\left(H^{\ast} \left(\tfrac{s^2-4p}{4}\right) - H^{\ast} \left(\tfrac{s^2-4p}{16}\right) \right).
\end{equation}
Define the set
$$L(s,p):= \{ \lambda \mid 2 \leq \lambda \leq p-1, \chi_4(\lambda)=1, \phi(\lambda-1)=-1, a_p(\lambda)=s\}.$$
Then, by the Hasse bound and Lemma \ref{lem_8tors}(1), we obtain
\begin{equation}\label{for_ap_to_L32}
 \sum_{\substack{\lambda=2 \\ \chi_4(\lambda)=1 \\ \phi(\lambda-1)=-1}}^{p-1} a_p(\lambda)
= \sum_{0 < |s| < 2\sqrt{p}} \; \sum_{\lambda \in L(s,p)} s
= \sum_{\substack{0 < |s| < 2\sqrt{p}\\s \equiv p+1 \imod{16}}} s \; |L(s,p)|.
\end{equation}
Let $\mathcal{I}_p$ denote the set of all isomorphism classes of elliptic curves over $\mathbb{F}_p$. Define
\[
I(s,p):= \left\{\mathcal{C} \in \mathcal{I}_p \biggm|
\parbox{2.75in}{\begin{center}
\textnormal{$\forall E \in \mathcal{C}$, $a_p(E) = s$, $E(\mathbb{F}_p)$ contains} \\
\textnormal{$\mathbb{Z} / 2 \mathbb{Z} \times  \mathbb{Z} /  8 \mathbb{Z}$  but not $\mathbb{Z} / 4 \mathbb{Z} \times  \mathbb{Z} /  4\mathbb{Z}$}
\end{center}}
\right\}.
\]
We now consider the map $F:L(s,p) \to I(s,p)$ given by $\lambda \mapsto [E_\lambda]$, where $[E_\lambda] \in \mathcal{I}_p$ is the isomorphism class containing $E_\lambda$.
Lemma \ref{lem_8tors} ensures $F$ is well-defined and surjective.
For $\lambda \in L(s,p)$ we see by Proposition \ref{prop_Etwist} and (\ref{for_aTwist}) that $E_{\lambda}$ is isomorphic to $E_{\frac{1}{\lambda}}$, $E_{1-\lambda}$ and $E_{ \frac{\lambda-1}{\lambda}}$ but not to $E_{ \frac{\lambda}{\lambda-1}}$ nor $E_{ \frac{1}{1-\lambda}}$ .
However, if $\lambda \in L(s,p)$ then $\tfrac{1}{\lambda}  \in L(s,p)$ but $1-\lambda$, $\tfrac{\lambda-1}{\lambda} \not\in L(s,p)$, as they do not meet the conditions.

By Proposition \ref{prop_Legendrej} we then see that, for a given $\lambda \in L(s,p)$, if $j([E(\lambda)]) \neq 0 ,1728$ then $F$ is two-to-one. We note that there are no classes of curves in $I(s,p)$ with $j=0$ or $1728$.
Otherwise, in the case $j=0$, $L(s,p)$ would have to contain $\lambda =\frac{1 \pm \sqrt{-3}}{2}$ which satisfies $\lambda^2-\lambda+1 = 0$, i.e, $\lambda-1$ is a square. In the case $j=1728$, $L(s,p)$ would have to contain $\lambda \in \{2, -1, \frac{1}{2}\}$, none of which satisfy the conditions.
Also, as $s\neq0$ we note that there are no classes of supersingular curves in  $I(s,p)$.
Hence, $\mathcal{O}\left(\tfrac{s^2-4p}{4}\right) \subset \mathcal{O}\left(\tfrac{s^2-4p}{16}\right) \not\subseteq \mathbb{Z}\left[\tfrac{-1+\sqrt{-3}}{2}\right]$ or $ \mathbb{Z}\left[\sqrt{-1}\right]$.
(See \cite[\S 3]{Sc} for details.)

By Theorem \ref{thm_Schoof} we know that $|I(s,p)| = \left( H\left(\tfrac{s^2-4p}{4}\right) - H\left(\tfrac{s^2-4p}{16}\right) \right)$, as $s \equiv p+1 \pmod{16}$. Therefore
\begin{equation*}
|L(s,p)|=2 \left( H\left(\tfrac{s^2-4p}{4}\right) - H\left(\tfrac{s^2-4p}{16}\right) \right) =2 \left(H^{\ast} \left(\tfrac{s^2-4p}{4}\right) - H^{\ast} \left(\tfrac{s^2-4p}{16}\right) \right),
\end{equation*}
and \eqref{for_ap_to_HStar32} holds via \eqref{for_ap_to_L32}.
So, combining \eqref{for_3F2_to_ap32}, \eqref{for_apSplit32} and \eqref{for_ap_to_HStar32} we obtain
\begin{equation}\label{for_3F2_to_HStar32}
{_{3}F_2} \biggl( \begin{array}{ccc} \chi_4, & \phi, & \phi \vspace{.02in}\\
\phantom{\phi} & \varepsilon, & \varepsilon \end{array}
\Big| \; 1 \biggr)_{p}
= 2 + 8 \sum_{\substack{0 < |s| < 2\sqrt{p}\\s \equiv p+1 \imod{16}}} s \, \left(H^{\ast} \left(\tfrac{s^2-4p}{4}\right) - H^{\ast} \left(\tfrac{s^2-4p}{16}\right) \right).
\end{equation}
Comparing (\ref{for_rtp32}) and (\ref{for_3F2_to_HStar32}) it now suffices to show
\begin{multline}\label{for_H_to_h32}
8 \sum_{\substack{0 < |s| < 2\sqrt{p}\\s \equiv p+1 \imod{16}}} s \, \left(H^{\ast} \left(\tfrac{s^2-4p}{4}\right) - H^{\ast} \left(\tfrac{s^2-4p}{16}\right) \right)\\
=
-\displaystyle\sum_{\substack{0< |s| <2 \sqrt{p} \\ s  \equiv p+1 \imod {16}}} s \; \displaystyle\sum_{\substack{f \mid t\\ s^2-4p=t^2D}} h^*\left(\tfrac{s^2-4p}{f^2}\right) c_3(s,f)/2,
\end{multline}
where $D$ is a fundamental discriminant and $c_3(s,f)$ is as described in the Corollary \ref{cor_Trace32New}.

We now use (\ref{def_BigH}) and Lemma \ref{lem_hrel} to show that (\ref{for_H_to_h32}) holds. We proceed on a case by case basis depending on the congruence class of both $p$ and $D$.
Recall $a:=\ord_2(t)$ and $b:=\ord_2(f)$. Let $p \equiv 1 \pmod 8$ and $D \equiv 1 \pmod 8$. Then by Proposition \ref{prop_pDa} we have $a>2$.
We first examine the left-hand side of (\ref{for_H_to_h32}). By Lemma \ref{lem_hrel}
$$h^*\left(\tfrac{s^2-4p}{f^2}\right) = h^*\left(\tfrac{s^2-4p}{(2f)^2}\right) \times
\begin{cases}
2 & \textup{if } a-b \geq 2,\\
1 & \textup{if } a-b = 1,
\end{cases}$$
and
$$h^*\left(\tfrac{s^2-4p}{f^2}\right) = h^*\left(\tfrac{s^2-4p}{(4f)^2}\right) \times
\begin{cases}
4 & \textup{if } a-b \geq 3,\\
2 & \textup{if } a-b = 2.
\end{cases}$$
Therefore using (\ref{def_BigH}) we see that
\begin{align}\label{for_LHS32_1}
\notag 8 \, \left(H^{\ast} \left(\tfrac{s^2-4p}{4}\right) - H^{\ast} \left(\tfrac{s^2-4p}{16}\right) \right)
&= 8 \displaystyle\sum_{\substack{f \mid t\\ a-b \geq 1}} h^*\left(\tfrac{s^2-4p}{(2f)^2}\right)
- 8 \displaystyle\sum_{\substack{f \mid t\\ a-b \geq 2}} h^*\left(\tfrac{s^2-4p}{(4f)^2}\right)\\
&=2  \displaystyle\sum_{\substack{f \mid t\\ a-b \geq 3}} h^*\left(\tfrac{s^2-4p}{f^2}\right)
+ 8 \displaystyle\sum_{\substack{f \mid t\\ a-b = 1}} h^*\left(\tfrac{s^2-4p}{f^2}\right).
\end{align}
Now the coefficient of $s$ in the expression on the right-hand side of (\ref{for_H_to_h32}) in this case equals
\begin{equation}\label{for_RHS32_1}
2 \displaystyle\sum_{\substack{f \mid t\\ a-b \geq 3}} h^*\left(\tfrac{s^2-4p}{f^2}\right) +
2 \displaystyle\sum_{\substack{f \mid t\\ a-b = 2}} h^*\left(\tfrac{s^2-4p}{f^2}\right) +
3 \displaystyle\sum_{\substack{f \mid t\\ a-b = 1}} h^*\left(\tfrac{s^2-4p}{f^2}\right) +
1 \displaystyle\sum_{\substack{f \mid t\\ a-b = 0}} h^*\left(\tfrac{s^2-4p}{f^2}\right).
\end{equation}
As $a>2$,
\begin{align*}
\displaystyle\sum_{\substack{f \mid t\\ a-b = 0}} h^*\left(\tfrac{s^2-4p}{f^2}\right)
=
\displaystyle\sum_{\substack{f \mid t\\ a-b = 1}} h^*\left(\tfrac{s^2-4p}{(2f)^2}\right)
=  \displaystyle\sum_{\substack{f \mid t\\ a-b = 1}} h^*\left(\tfrac{s^2-4p}{f^2}\right),
\end{align*}
and
\begin{align*}
2 \displaystyle\sum_{\substack{f \mid t\\ a-b = 2}} h^*\left(\tfrac{s^2-4p}{f^2}\right)
=
4 \displaystyle\sum_{\substack{f \mid t\\ a-b = 2}} h^*\left(\tfrac{s^2-4p}{(2f)^2}\right)
= 4 \displaystyle\sum_{\substack{f \mid t\\ a-b = 1}} h^*\left(\tfrac{s^2-4p}{f^2}\right).
\end{align*}
Thus (\ref{for_RHS32_1}) equals
\begin{equation}\label{for_RHS32_2}
2 \displaystyle\sum_{\substack{f \mid t\\ a-b \geq 3}} h^*\left(\tfrac{s^2-4p}{f^2}\right) +
8 \displaystyle\sum_{\substack{f \mid t\\ a-b = 1}} h^*\left(\tfrac{s^2-4p}{f^2}\right).
\end{equation}
Expressions (\ref{for_RHS32_2}) and (\ref{for_LHS32_1}) are equal and so (\ref{for_H_to_h32}) holds in the case $p \equiv 1 \pmod 8$ and $D \equiv 1 \pmod 8$. The other cases proceed in a similar manner. We omit the details for reasons of brevity.
We conclude, therefore, that Theorem \ref{thm_3F2_32} is true.
\end{proof}


\begin{remark}
Using the same methods as in the proof of Theorem~\ref{thm_3F2_32}, together with Theorem~\ref{thm_Trace16}, we obtain a new proof of Theorem~\ref{thm_3F2_16}, but we do not present the details here.
\end{remark}


We now recall a theorem of the first author that is a finite field version of Whipple's ${}_4F_3$-hypergeometric transformation for well-poised series.  If $A \in \widehat{\mathbb{F}_p^{*}}$ is a square we will write $A = \square$.

\begin{theorem}[{McCarthy~\cite[Thm.~1.5]{McC6}}]\label{thm_4F3_3F2}
For $A$, $B$, $C$, $D \in \widehat{\mathbb{F}_p^{*}}$,
\begin{multline*}
{_{4}F_3} \biggl( \begin{array}{cccc} A, & B, & C, & D \vspace{.02in}\\
\phantom{A} & A\bar{B}, & A\bar{C}, & A\bar{D}  \end{array}
\Big| \; {-1} \biggr)_{p}\\
=
\begin{cases}
0 & \textup{if $A \neq \square$,}\\[4pt]
\dfrac{g(\bar{A}) \, g(\bar{A}CD) }{ g(\bar{A}C) \, g( \bar{A}D)}
\displaystyle\sum_{R^2=A}
{_{3}F_2} \biggl( \begin{array}{ccc} R\bar{B}, & C, & D \vspace{.02in}\\
\phantom{A} & R, & A\bar{B} \end{array}
\Big| \; 1 \biggr)_{p}
& \genfrac{}{}{0pt}{0} {\textup{if $A = \square$,  $A \neq \varepsilon$, $B \neq \varepsilon$,}}{\;\;\; \textup{$B^2 \neq A$ and $CD \neq A$.}}
\end{cases}
\end{multline*}
\end{theorem}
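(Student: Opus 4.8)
The plan is to reduce everything to Gauss sums via the definition \eqref{def_HypFnFF} and then reproduce, at the level of Gauss-sum identities, the classical proof of Whipple's transformation of a well-poised ${}_4F_3(-1)$ into a ${}_3F_2(1)$. Setting $n=3$ in \eqref{def_HypFnFF}, using $\chi(-1)^{4}=1$ and $\chi(x)=\chi(-1)$, the left-hand side becomes
\[
{}_4F_3(\,\cdots \mid -1)_p=\frac{1}{(p-1)\,\kappa}\sum_{\chi} g(A\chi)g(B\chi)g(C\chi)g(D\chi)\,g(\bar A B\bar\chi)g(\bar A C\bar\chi)g(\bar A D\bar\chi)\,g(\bar\chi)\,\chi(-1),
\]
where $\kappa=g(A)g(B)g(C)g(D)g(\bar A B)g(\bar A C)g(\bar A D)$ is a $\chi$-independent constant. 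The two cases of the theorem will be handled by two distinct devices: a symmetry argument for $A\neq\square$, and the Hasse--Davenport multiplication relation for $A=\square$.

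For the case $A\neq\square$, I would exploit the involution $\iota\colon\chi\mapsto\bar A\bar\chi$ of $\widehat{\mathbb{F}_p^*}$. Under $\iota$ the eight numerator Gauss sums are merely permuted: $g(A\chi)\leftrightarrow g(\bar\chi)$, $g(B\chi)\leftrightarrow g(\bar A B\bar\chi)$, and likewise for the $C$- and $D$-pairs, so the numerator product is invariant. The only surviving change is in the sign factor, $\chi(-1)\mapsto(\bar A\bar\chi)(-1)=A(-1)\chi(-1)$. Writing $A=\omega^{a}$ for a generator $\omega$ of $\widehat{\mathbb{F}_p^*}$ gives $A(-1)=(-1)^{a}$, so $A(-1)=-1$ exactly when $a$ is odd, i.e.\ when $A\neq\square$. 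Hence in this case the sum equals its own negative and therefore vanishes, which is the first branch of the theorem.

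For the case $A=\square$, the same involution now fixes the summand (since $A(-1)=+1$), and the real work begins: one must convert the single $\chi$-sum into the symmetrized double sum $\sum_{R^2=A}{}_3F_2(R\bar B,C,D;R,A\bar B\mid 1)_p$. Here the two square roots $R$ and $R\phi$ of $A$ are the finite-field incarnation of the classical half-parameter $1+\tfrac12 a$, and the plan is to introduce them through the Hasse--Davenport product relation $g(\eta)g(\eta\phi)=\bar\eta(4)\,g(\phi)\,g(\eta^{2})$ (the analogue of the Legendre duplication formula), together with the reflection relation $g(\eta)g(\bar\eta)=\eta(-1)\,p$ for $\eta\neq\varepsilon$, to split and recombine the Gauss sums attached to $A\chi=R^{2}\chi$. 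The residual, summation-variable-independent Gauss sums should then assemble into the prefactor $g(\bar A)g(\bar A CD)/\bigl(g(\bar A C)g(\bar A D)\bigr)$, exactly as the quotient of Gamma factors does classically. The hypotheses $A\neq\varepsilon$, $B\neq\varepsilon$, $B^{2}\neq A$, and $CD\neq A$ are precisely what guarantee that no trivial-character Gauss sum is created in this process and that the duplication relation may be applied in the directions required.

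The step I expect to be the main obstacle is this last reorganization for $A=\square$: keeping exact track of the accumulated constants --- the powers of $2$ from the $\bar\eta(4)$ factors in each application of Hasse--Davenport, the signs $\eta(-1)$ and powers of $p$ from reflection, and the overall factor $1/(p-1)$ --- and verifying that they collapse into precisely the stated prefactor with no spurious terms. A cleaner conceptual route, which I would use to organize this bookkeeping, is to pass through the Gross--Koblitz formula and rewrite every Gauss sum as a product of $p$-adic Gamma values $\Gamma_p$; the identity then becomes the classical Whipple transformation applied termwise in $\Gamma_p$, with the multiplication formula for $\Gamma_p$ generating the two square roots $R$ and hence the sum $\sum_{R^2=A}$. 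Either way, once the $A=\square$ Gauss-sum identity is established it combines with the vanishing proved for $A\neq\square$ to give the full statement.
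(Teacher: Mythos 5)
A preliminary remark: the paper does not prove this theorem at all --- it is quoted wholesale from McCarthy~\cite[Thm.~1.5]{McC6} and used as a black box in the proof of Theorem~\ref{thm_4F3} --- so there is no in-paper argument to compare yours against; your proposal has to stand as a self-contained proof. Judged that way, your first branch is correct and complete: under the involution $\chi \mapsto \bar{A}\bar{\chi}$ the eight Gauss sums pair off as $g(A\chi)\leftrightarrow g(\bar{\chi})$, $g(B\chi)\leftrightarrow g(\bar{A}B\bar{\chi})$, $g(C\chi)\leftrightarrow g(\bar{A}C\bar{\chi})$, $g(D\chi)\leftrightarrow g(\bar{A}D\bar{\chi})$, the factor $\chi(-1)$ picks up $A(-1)$, and $A(-1)=-1$ exactly when $A$ is a non-square in the cyclic group $\widehat{\mathbb{F}_p^*}$ of even order $p-1$; hence the sum vanishes when $A \neq \square$.

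The genuine gap is that the branch $A = \square$, which is the entire content of the theorem, is never proved. You name plausible tools (the Hasse--Davenport product relation $g(\eta)g(\eta\phi) = \bar{\eta}(4)\,g(\phi)\,g(\eta^2)$ and the reflection formula $g(\eta)g(\bar{\eta}) = \eta(-1)\,p$), but the step that matters --- converting the single character sum into the prefactor $g(\bar{A})g(\bar{A}CD)/\bigl(g(\bar{A}C)g(\bar{A}D)\bigr)$ times $\sum_{R^2=A}{}_3F_2(R\bar{B},C,D;R,A\bar{B}\mid 1)_p$ --- is exactly the step you defer, by your own admission, and it does not follow from those two identities alone. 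Concretely: (i) to split $g(A\chi)$ by Hasse--Davenport you must realize $A\chi$ as a square, $A\chi=(R\psi)^2$ with $\psi^2=\chi$, which is possible for only half of the characters $\chi$; legitimizing this forces a reindexing of the sum, and that reindexing is precisely where the two roots $R$ and the attendant factors enter --- none of which is addressed. (ii) Even granting the splitting, no termwise rewriting can finish the proof: the summand of the ${}_4F_3$ at a given $\chi$ does not correspond to the summand of either ${}_3F_2$ at $\chi$, since the two sides are different character sums; as in the classical proof of Whipple's transformation, one needs an actual summation theorem (a finite-field Gauss/Vandermonde-type evaluation, equivalently a rearrangement of a double character sum), and no such ingredient appears in your plan. (iii) For the same reason your proposed shortcut via Gross--Koblitz --- ``the classical Whipple transformation applied termwise in $\Gamma_p$'' --- is not a proof: Gross--Koblitz rewrites each individual Gauss sum in terms of $p$-adic Gamma values, but the identity to be proved equates sums, not corresponding terms, and controlling the resulting floor-function exponents across the whole sum is the substance of McCarthy's actual argument rather than a formality. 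So you have proved the vanishing branch and, for the substantive branch, produced only a strategy whose central computation is missing.
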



\begin{proof}[Proof of Theorem \ref{thm_4F3}]
Van Geemen and van Straten \cite[\S 8.7]{vGvS} conjectured that there exists a relationship between the Andrianov $L$-function of the form $F_7$ and the tensor product $L$-function of the forms $f_1$ and $f_2$.
As mentioned previously we understand that this conjecture has been resolved by Okazaki \cite{Ok} and is the subject of a forthcoming paper (see also ~\cite[\S 1]{Ok2} for additional discussion on this problem).
Specifically, the conjecture implies that there exists an eigenform $F \in S_3^{2}(\Gamma^2(4,8))$ such that $F = F_7 | \gamma$ for some $\gamma \in \textup{Sp}_4(\mathbb{Z})$ and
$$ L^{a}(s,F) = L(s, f_1 \otimes f_2).$$
The Andrianov $L$-Function for forms of degree 2 with level is described in \cite{Ev}.
We see that $L^{a}(s,F_7)$ has an Euler product over odd primes with local factor
\begin{equation}\label{for_La_pfactorF7}
L^{a}(s,F_7)_p = \left[ 1 - \lambda(p) p^{-s} + (\lambda(p)^2- \lambda(p^2) -  p^{2}) p^{-2s} -  \lambda(p) p^{3-3s} + p^{6-4s} \right]^{-1},
\end{equation}
where $\lambda(p^2)$ is the eigenvalue associated to the Hecke operator of index $p^2$.  From \cite[Prop.~2.2]{Ok2} we then obtain
\[
L^{a}(s,F)_p = \left[ 1 - \zeta(p) \lambda(p) p^{-s} + (\lambda(p)^2- \lambda(p^2) -  p^{2}) p^{-2s} -  \zeta(p) \lambda(p) p^{3-3s} + p^{6-4s} \right]^{-1}
\]
for a certain function $\zeta(t)$ which is defined on odd $t$ modulo $8$ (see also \cite[Thm.~2.2]{AA}).

We now examine $L(s, f_1 \otimes f_2)$. Let $\psi(\cdot) =  (\tfrac{-4}{\cdot})$ be the character associated to $f_2$.  Given that $f_1$ and $f_2$ are both Hecke eigenforms we know by the work of Shimura \cite{Sh} that
$L(s, f_1 \otimes f_2)$ has an Euler product with local factors at odd primes of
\begin{multline*}
L(s, f_1 \otimes f_2)_p = \left[1 - a(p) b(p) p^{-s} + (p \, b(p)^2 + \psi(p) \, p^2 \, a(p)^2 - 2 \, \psi(p) \, p^3) p^{-2s}
\right. \\ \left.
- \psi(p) a(p) b(p) p^{3-3s} +p^{6-4s} \right]^{-1} .
\end{multline*}
We note also that $L(s, f_1 \otimes f_2)_2 = \left[1 - a(2) b(2) 2^{-s}\right]^{-1} = 1.$

Comparing the coefficients of $p^{-s}$ and $p^{-3s}$ in $L^a(s,F)_p$ and $L(s,f_1 \otimes f_2)_p$ we conclude that
$$ a(p) b(p) = \zeta(p) \lambda(p) = \bigl( \tfrac{-4}{p} \bigr) a(p) b(p) .$$
Therefore, we conclude that $\lambda(p) = 0$ when $p \equiv 3\pmod{4}$.
This is also apparent from the fact that $f_1$ is a CM-form and $a(p)=0$ for primes $p \equiv 3 \pmod 4$ (e.g., see \cite[Prop.~1]{O}).
For $p\equiv 1 \pmod 4$ we determine $\zeta(p)$ by examining the values of $a(p)$, $b(p)$ and $\lambda(p)$ when $p=5$, $17$, which are listed in \cite[\S\S 7--8]{vGvS}.
We determine therefore that $\zeta(p)= 1$, if $p \equiv 1 \pmod{8}$, and $\zeta(p)=-1$, if $p\equiv 5 \pmod{8}$, which coincides with $\xi(p)$.
Overall then, we find
\begin{equation*}
 \lambda(p) =
\begin{cases}
{\xi}(p) \, a(p) \, b(p) & \textup{if } p\equiv 1 \pmod 4,\\
0 & \textup{if }p\equiv 3 \pmod 4.
\end{cases}
\end{equation*}
By Theorem \ref{thm_4F3_3F2} we have that, when $p\equiv 3 \pmod 4$,
$${_{4}F_3} \biggl( \begin{array}{cccc} \phi, & \phi, & \phi, & \phi \vspace{.02in}\\
\phantom{\phi} & \varepsilon, & \varepsilon, & \varepsilon \end{array}
\Big| \; {-1} \biggr)_{p}=0.$$
Thus the theorem is proved in the case $p\equiv 3 \pmod 4$.

We now concentrate on the case when $p\equiv 1 \pmod 4$ and assume this throughout the remainder of the proof.  Combining Theorems~\ref{thm_2F1_32} and~\ref{thm_3F2_32} it suffices to show that
\begin{equation}\label{for_4F3_2F1x3F2}
{_{4}F_3} \biggl( \begin{array}{cccc} \phi, & \phi, & \phi, & \phi \vspace{.02in}\\
\phantom{\phi} & \varepsilon, & \varepsilon, & \varepsilon \end{array}
\Big| \; {-1} \biggr)_{p}
= {_{2}F_1} \biggl( \begin{array}{cc} \phi, & \phi \vspace{.02in}\\
\phantom{\phi} & \varepsilon,  \end{array}
\Big| \; {-1} \biggr)_{p}
 \cdot
{_{3}F_2} \biggl( \begin{array}{ccc} \chi_4, & \phi, & \phi \vspace{.02in}\\
\phantom{\chi_4} & \varepsilon, & \varepsilon \end{array}
\Big| \; 1 \biggr)_{p}.
\end{equation}
Now Theorem \ref{thm_4F3_3F2} yields
\[
{_{4}F_3} \biggl( \begin{array}{cccc} \phi, & \phi, & \phi, & \phi \vspace{.02in}\\
\phantom{\phi} & \varepsilon, & \varepsilon, & \varepsilon \end{array}
\Big| \; {-1} \biggr)_{p}
=p \left[
{_{3}F_2} \biggl( \begin{array}{ccc} \bar{\chi_4}, & \phi, & \phi \vspace{.02in}\\
\phantom{\bar{\chi_4}} & \chi_4, & \varepsilon \end{array}
\Big| \; 1 \biggr)_{p}
+
{_{3}F_2} \biggl( \begin{array}{ccc} \chi_4, & \phi, & \phi \vspace{.02in}\\
\phantom{\chi_4} & \bar{\chi_4}, & \varepsilon \end{array}
\Big| \; 1 \biggr)_{p}
\right].
\]
We use transformations of Greene \cite[(4.23), (4.25)]{G} (with \cite[Prop.~2.5]{McC6}) to relate both $_3F_2$'s in this equation to the $_3F_2$ in (\ref{for_4F3_2F1x3F2}).  This gives us
\begin{equation*}\label{for_4F3_3F2_1}
{_{4}F_3} \biggl( \begin{array}{cccc} \phi, & \phi, & \phi, & \phi \vspace{.02in}\\
\phantom{\phi} & \varepsilon, & \varepsilon, & \varepsilon \end{array}
\Big| \; {-1} \biggr)_{p}
= - {_{3}F_2} \biggl( \begin{array}{ccc} \chi_4, & \phi, & \phi \vspace{.02in}\\
\phantom{\chi_4} & \varepsilon, & \varepsilon \end{array}
\Big| \; 1 \biggr)_{p} \times
\left[\frac{g(\phi) \, g(\chi_4)}{g(\bar{\chi_4})} + \frac{g(\phi) \, g(\bar{\chi_4})}{g(\chi_4)} \right].
\end{equation*}
By \cite[Thm.~1.10]{McC6},
\begin{equation*}
 {_{2}F_1} \biggl( \begin{array}{cc} \phi, & \phi \vspace{.02in}\\
\phantom{\phi} & \varepsilon,  \end{array}
\Big| \; {-1} \biggr)_{p}
= -\frac{g(\phi) \, g(\chi_4)}{g(\bar{\chi_4})} - \frac{g(\phi) \, g(\bar{\chi_4})}{g(\chi_4)}.
\end{equation*}
Therefore (\ref{for_4F3_2F1x3F2}) holds and the theorem is proved.
\end{proof}


\section{Closing Remarks}\label{sec_remarks}
We first note that the definition of $_{n+1}F_n $ as described in  (\ref{def_HypFnFF}) can be easily extended to finite fields with a prime power number of elements \cite{McC6}.  We also note that the representation of the Hecke algebra in $M_k^2(\Gamma^{2}(q))$ is generated by $T(p)$ and $T(p^2)$, the Hecke operators of index $p$ and $p^2$ respectively \cite{Ev}.
Therefore, for a given eigenform $F$ of degree 2 we are also interested in $\lambda(p^2)$, the eigenvalue associated to the action of $T(p^2)$ on $F$.
Based on (limited) numerical evidence it appears that the eigenvalue $\lambda(p^2)$ associated to the eigenform $F_7$ is also related to a hypergeometric function but over $\mathbb{F}_{p^2}$, as follows.
Let $a_p:=\lambda(p)$ and $a_{p^2}:=\lambda(p)^2- \lambda(p^2) -  p^{2}$ corresponding (up to sign) to the coefficients of $p^{-s}$ and $p^{-2s}$ in (\ref{for_La_pfactorF7}) respectively.
Then we have observed for primes $p<20$ that
$$a_p^2 - 2 \, a_{p^2}
= {_{4}F_3} \biggl( \begin{array}{cccc} \phi, & \phi, & \phi, & \phi \vspace{.02in}\\
\phantom{\phi} & \varepsilon, & \varepsilon, & \varepsilon \end{array}
\Big| \; {-1} \biggr)_{p^2}.$$
It is conceivable that this identity holds for all odd primes $p$, and we believe similar methods to those employed in the main body of this paper could be applied to proving it.



\begin{thebibliography}{999}

\bibitem{A} S. Ahlgren, \emph{Gaussian hypergeometric series and combinatorial congruences}, in: Symbolic computation, number theory, special functions, physics and combinatorics (Gainesville, FL, 1999), 1--12, Dev. Math., \textbf{4}, Kluwer, Dordrecht, 2001.

\bibitem{A2} S. Ahlgren, \emph{The points of a certain fivefold over finite fields and the twelfth power of the eta function}, Finite Fields Appl. \textbf{8} (2002), no.~1, 18--33.

\bibitem{AO} S. Ahlgren, K. Ono, \emph{A Gaussian hypergeometric series evaluation and Ap{\'e}ry number congruences}, J. Reine Angew. Math. \textbf{518} (2000), 187--212.

\bibitem{AO2} S. Ahlgren, K. Ono, \emph{Modularity of a certain Calabi-Yau threefold}, Monatsh. Math. \textbf{129} (2000), no.~3, 177--190.

\bibitem{AA} A. N. Andrianov, F. A. Andrianov, \emph{Action of Hecke operators on products of Igusa theta constants with rational characteristics}, Acta Arith. \textbf{114} (2004), no.~2, 113--133.

\bibitem{BEW} B. C. Berndt, R. J. Evans, K. S. Williams, \emph{Gauss and Jacobi Sums},  John Wiley \& Sons, Inc., New York, 1998.

\bibitem{Co} D. A. Cox, \emph{Primes of the Form $x^2+ny^2$}, John Wiley \& Sons, Inc., New York, 1989.

\bibitem{E} R. Evans, \emph{Hypergeometric $_3F_2(1/4)$ evaluations over finite fields and Hecke eigenforms}, Proc. Amer. Math. Soc. \textbf{138} (2010), no.~2, 517--531.

\bibitem{Ev} S. A. Evdokimov, \emph{Euler products for congruence subgroups of the Siegel group of genus $2$}, Math. USSR-Sb. \textbf{28} (1976), no. 4, 431--458.

\bibitem{Ev2} S. A. Evdokimov, \emph{A basis of eigenfunctions of Hecke operators in the theory of modular forms of genus~$n$}, Math. USSR-Sb. \textbf{43} (1982), no. 3, 299--321.

\bibitem{FOP} S. Frechette, K. Ono, M. Papanikolas, \emph{Gaussian hypergeometric functions and traces of Hecke operators}, Int. Math. Res. Not. \textbf{2004}, no. 60, 3233--3262.

\bibitem{F2} J. G. Fuselier, \emph{Hypergeometric functions over $\mathbb{F}_p$ and relations to elliptic curves and modular forms}, Proc. Amer. Math. Soc. \textbf{138} (2010), no.~1, 109--123.

\bibitem{FMcC} J. G. Fuselier, D. McCarthy, \emph{Hypergeometric type identities in the $p$-adic setting and modular forms}, 2014, arXiv:1407.6670.

\bibitem{vGvS} B. van Geemen, D. van Straten, \emph{The cusp forms of weight $3$ on $\Gamma_2(2,4,8)$}, Math. Comp. \textbf{61} (1993), no. 204, 849--872.

\bibitem{G} J. Greene, \emph{Hypergeometric functions over finite fields}, Trans. Amer. Math. Soc. \textbf{301} (1987), no.~1, 77--101.

\bibitem{HPS} H. Hijikata, A. K. Pizer, T. R. Shemanske, \emph{The basis problem for modular forms on $\Gamma_0(N)$}, Mem. Amer. Math. Soc. \textbf{82} (1989), no. 418, vi+159 pp.

\bibitem{K} N. M. Katz, \emph{Exponential Sums and Differential Equations}, Princeton Univ. Press, Princeton, 1990.

\bibitem{Kl} H. Klingen, \emph{Introductory Lectures on Siegel Modular Forms}, Cambridge Univ. Press, Cambridge, 1990.

\bibitem{Kn} A. W. Knapp, \emph{Elliptic Curves}, Princeton Univ. Press, Princeton, 1992.

\bibitem{Kk} M. Koike, \emph{Orthogonal matrices obtained from hypergeometric series over finite fields and elliptic curves over finite fields}, Hiroshima Math. J. \textbf{25} (1995), no.~1, 43--52.

\bibitem{L} C. Lennon, \emph{Trace formulas for Hecke operators, Gaussian hypergeometric functions, and the modularity of a threefold}, J. Number Theory \textbf{131} (2011), no.~12, 2320--2351.

\bibitem{McC5} D. McCarthy, \emph{On a supercongruence conjecture of Rodriguez-Villegas}, Proc. Amer. Math. Soc. \textbf{140} (2012), no.~7, 2241--2254.

\bibitem{McC6} D. McCarthy, \emph{Transformations of well-poised hypergeometric functions over finite fields}, Finite Fields Appl. \textbf{18} (2012), no.~6, 1133--1147.

\bibitem{M} E. Mortenson, \emph{Supercongruences for truncated ${}_{n+1}F_{n}$ hypergeometric series with applications to certain weight three newforms}, Proc. Amer. Math. Soc. \textbf{133} (2005), no.~2, 321--330.

\bibitem{Ok} T. Okazaki, \emph{Triple geometric forms for $\mathrm{SU}(2,2)$}, in preparation.

\bibitem{Ok2} T. Okazaki, \emph{L-functions of $S_3(\Gamma(4,8))$}, J. Number Theory \textbf{125} (2007), no.~1, 117--132.

\bibitem{O} K. Ono, \emph{Values of Gaussian hypergeometric series}, Trans. Amer. Math. Soc. \textbf{350} (1998), no.~3, 1205--1223.

\bibitem{P} M. Papanikolas, \emph{A formula and a congruence for Ramanujan's $\tau$-function}, Proc. Amer. Math. Soc. \textbf{134} (2006), no.~2, 333--341.

\bibitem{Sc} R. Schoof, \emph{Nonsingular plane cubic curves over finite fields}, J. Combin. Theory Ser. A \textbf{46} (1987), no.~2, 183--211.

\bibitem{Sh73} G. Shimura, \emph{On modular forms of half integral weight}, Ann. of Math. (2) \textbf{97} (1973), 440--481.

\bibitem{Sh} G. Shimura, \emph{The special values of the zeta functions associated with cusp forms}, Comm. Pure Appl. Math. \textbf{29} (1976), no.~6, 783--804.

\bibitem{Si} J. H. Silverman, \emph{The Arithmetic of Elliptic Curves}, 2nd ed.,  Springer-Verlag, New York, 2009.

\bibitem{Sage}
W. A. Stein et~al., \emph{Sage Mathematics Software (Version
  4.8)}, The Sage Development Team, 2012, http://www.sagemath.org.

\end{thebibliography}
\end{document}